\DeclareMathOperator{\vol}{vol}
\DeclareMathOperator{\lin}{lin}
\DeclareMathOperator{\conv}{conv}
\DeclareMathOperator{\pos}{pos}
\DeclareMathOperator{\intt}{int}
\DeclareMathOperator{\parr}{par}
\newcommand{\R}{\mathbb{R}}
\newcommand{\Z}{\mathbb{Z}}
\newcommand{\N}{\mathbb{N}}
\newcommand{\bx}{\bm{x}}
\newcommand{\bh}{\bm{h}}
\newcommand{\bB}{\bm{B}}
\newcommand{\br}{\bm{r}}
\newcommand{\be}{\bm{e}}
\newcommand{\bR}{\bm{R}}
\newcommand{\by}{\bm{y}}
\newcommand{\bz}{\bm{z}}
\newcommand{\bb}{\bm{b}}
\newcommand{\bv}{\bm{v}}
\newcommand{\bw}{\bm{w}}
\newcommand{\btau}{\bm{\tau}}
\newcommand{\bmu}{\bm{\mu}}
\newcommand{\blambda}{\bm{\lambda}}
\newtheorem{theorem}{Theorem}
\newtheorem{theoreml}{Theorem}
\newtheorem{proposition}[theorem]{Proposition}
\newtheorem{lemma}[theorem]{Lemma}
\newtheorem{corollary}[theorem]{Corollary}
\numberwithin{theorem}{section}
\newcommand{\Cr}{\operatorname{ICR}}
\newcommand\blfootnote[1]{%
	\begingroup
	\renewcommand\thefootnote{}\footnote{#1}%
	\addtocounter{footnote}{-1}%
	\endgroup
}
\title{Integer Carath\'{e}odory results with bounded multiplicity}
\author{Stefan Kuhlmann\thanks{Department of Mathematics, Institute for Operations Research, ETH Zürich, Switzerland, stefan.kuhlmann@math.ethz.ch}}
\date{}
\begin{document}
	\maketitle
	\noindent\textbf{Abstract.} The integer Carath\'{e}odory rank of a pointed rational cone $C$ is the smallest number $k$ such that every integer vector contained in $C$ is an integral non-negative combination of at most $k$ Hilbert basis elements. 
	We investigate the integer Carath\'{e}odory rank of simplicial cones with respect to their multiplicity, i.e., the determinant of the integral generators of the cone. 
	One of the main results states that simplicial cones with multiplicity bounded by five have the integral Carath\'eodory property, that is, the integer Carath\'eodory rank equals the dimension. Furthermore, we give a novel upper bound on the integer Carath\'eodory rank that depends on the dimension and the multiplicity. This bound improves upon the best known upper bound on the integer Carath\'eodory rank if the dimension exceeds the multiplicity. At last, we present special cones that have the integral Carath\'eodory property such as certain dual cones of Gorenstein cones.\blfootnote{Keywords: Polyhedral cones, Hilbert bases, integer Carath\'eodory rank, multiplicity} 

	\section{Introduction}
	Given a full-dimensional pointed rational cone $C\subseteq \R^n$, every vector contained in $C$ is a non-negative combination of the \textit{generators of $C$}, i.e., vectors that lie on extreme rays of $C$. A classical result by Carath\'{e}odory states that each $\bx\in C$ is a non-negative combination of at most $n$ generators. In this manner, an integral analogue of Carath\'{e}odory's theorem is to ask whether every integral vector $\bz\in C\cap\Z^n$ can be expressed as the non-negative integral combination of at most $n$ elements from the unique minimal generating set of $C\cap\Z^n$, the \textit{Hilbert basis of $C$}; see Section \ref{ss_notation_hilbert} for more details concerning Hilbert bases. 
	
	This was proven to be false \citep{brunsgubehenkcounterexampleintcara99}. The authors present cones $C_n$ that require at least $\left\lfloor \frac{7}{6}n\right\rfloor$ elements from the Hilbert basis to express certain integral vectors in $C_n$ for each $n\geq 1$. 
	This example raises the question of what the correct upper bound on the smallest number $k$ is such that every integer vector in $C$ is an integral non-negative combination of at most $k$ Hilbert basis elements. 
	We write $\Cr(C)$ for this number $k$ and refer to it as the \textit{integer Carath\'{e}odory rank of $C$}; see Section \ref{ss_notation_hilbert} for a precise definition. 
	The best known upper bound on $\Cr(C)$ is due to Seb\H{o}:
	\begin{theoreml}[{\cite[Theorem 2.1]{sebohilbertbasisdreidim90}}]
		\label{thm_sebo_icr}
		Let $C\subseteq\R^n$ be a full-dimensional pointed rational cone. Then 
		\begin{align*}
			\Cr(C)\leq 2n-2.
		\end{align*}
	\end{theoreml}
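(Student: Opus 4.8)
The plan is to reduce to the following structural statement about Hilbert bases: if $\bz \in C \cap \Z^n$ and $C$ is full-dimensional pointed rational, then either $\bz$ lies in a simplicial subcone spanned by at most $n$ Hilbert basis elements, or one can peel off a single Hilbert basis element $\bh$ so that $\bz - \bh$ still lies in $C$ and ``makes progress'' in a quantity that can be bounded. So first I would fix a generic linear functional $\bc$ that is strictly positive on $C \setminus \{0\}$, and among all Hilbert basis elements that can be subtracted from $\bz$ while staying in $C$, always choose the one that decreases $\bc^\top\bz$ the fastest relative to how far it moves us toward a boundary — the idea being to force $\bz$ into progressively smaller-dimensional faces.

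Second, and this is the real engine, I would use the fact that a full-dimensional pointed cone can be triangulated into simplicial cones each of whose generators are Hilbert basis elements of $C$ (this is standard: take a triangulation of $C$ whose rays are among the Hilbert basis, which exists because the Hilbert basis contains the extreme ray generators). Then $\bz$ lies in one such simplicial piece $\sigma = \cone(\bh_1, \dots, \bh_n)$, so $\bz = \sum_{i=1}^n \lambda_i \bh_i$ with $\lambda_i \ge 0$. Write $\lambda_i = \lfloor \lambda_i \rfloor + \{\lambda_i\}$; the integral part $\sum \lfloor \lambda_i \rfloor \bh_i$ already uses at most $n$ Hilbert basis elements (with multiplicity), and the fractional remainder $\br = \sum \{\lambda_i\} \bh_i$ is an integral vector lying in the half-open parallelepiped of $\sigma$. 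The whole problem thus collapses to bounding $\Cr$ for such a remainder $\br$, which lies in a bounded region.

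Third, I would handle $\br$ by induction on dimension. The key geometric input: $\br$ is a nonzero integer point in $\sum_{i=1}^n \{\lambda_i\}\bh_i$ with each coefficient in $[0,1)$, so in particular $\bc^\top \br < \sum \bc^\top \bh_i$, which is bounded independently of $\bz$. Pick a Hilbert basis element $\bh$ with $\br - \bh \in C$; such an $\bh$ exists because $\br$ itself, being a non-generator integer point, is a non-negative integer combination of Hilbert basis elements, at least one of which we may subtract. After subtracting, $\br - \bh$ lies in some facet-defined subcone — more precisely, I would argue that after at most $n-2$ such subtractions the residual point lies in a $2$-dimensional (or lower) cone, and in dimension $2$ every integer point in a pointed rational cone is a non-negative integer combination of at most $2$ Hilbert basis elements (the $2$-dimensional case is elementary — the Hilbert basis of a $2$-dimensional cone consists of the ``staircase'' lattice points and any integer point sits in the triangle spanned by two consecutive ones). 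Counting: at most $n$ from the integral part of $\bz$, then at most $n-2$ subtractions to get down to dimension $2$, then at most $2$ more — but this naive count gives $2n$, so the sharpening to $2n-2$ requires observing that the first subtraction from $\br$ already lies on a facet, hence in an $(n-1)$-dimensional subcone whose own triangulation feeds the induction, saving two elements overall.

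\medskip
\noindent\textbf{Main obstacle.} The delicate point is the bookkeeping that yields $2n-2$ rather than a weaker linear bound: one must set up the induction so that the base case in dimension $2$ contributes $2$, dimension $3$ contributes $4$, and each increment of the dimension costs exactly $2$ more Hilbert basis elements. Concretely the hard step is showing that when $\br$ lies in the half-open parallelepiped and we subtract one well-chosen Hilbert basis element, the result not only stays in $C$ but lands in a proper face of $\sigma$ (so the dimension genuinely drops), and that the element we subtract can always be taken from the Hilbert basis rather than merely from the generators of $\sigma$. This requires a careful choice — essentially a greedy/extremal selection of the Hilbert basis element that maximizes the number of coordinates $\{\lambda_i\}$ driven to $0$ — together with the observation that a nonzero integer point with all parallelepiped-coordinates in $[0,1)$ cannot itself be a generator, so it must decompose through the Hilbert basis in a way compatible with this descent.
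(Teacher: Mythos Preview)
The paper does not prove Theorem~A; it quotes the bound from Seb\H{o} and only describes the argument at the level of ``reduce to a simplicial subcone spanned by Hilbert basis elements, then exploit that structure.'' Your first two steps --- pass to a simplicial cone $\sigma=\pos\lbrace\bh_1,\dots,\bh_n\rbrace$ with $\bh_i\in\mathcal{H}(C)$ and $\bz\in\sigma$, then split $\bz=\sum_i\lfloor\lambda_i\rfloor\bh_i+\br$ with $\br$ in the half-open parallelepiped --- match that sketch and are the standard opening.

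The gap is in your treatment of the remainder $\br$. You propose to subtract one well-chosen Hilbert basis element so that the result lands in a proper face of $\sigma$, and then recurse on dimension. This step is unjustified and fails in general: a Hilbert basis element $\bh$ of $C$ with $\br-\bh\in C$ need not lie in $\sigma$ at all, and even when it does there is no mechanism forcing any parallelepiped coordinate of $\br-\bh$ to vanish. Your greedy rule (``maximize the number of $\lbrace\lambda_i\rbrace$ driven to $0$'') does not guarantee that even one coordinate becomes zero, so the dimension never drops and the induction never starts. Seb\H{o}'s actual argument does not reduce dimension inside $\sigma$. Instead one chooses $\sigma$ inside a facet of $\conv(\mathcal{H}(C))$ visible from the origin, so that the linear functional $\bc$ with $\bc(\bh_i)=1$ for all $i$ satisfies $\bc(\bh)\ge 1$ for \emph{every} $\bh\in\mathcal{H}(C)$. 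Then $\bc(\br)=\sum_i\lbrace\lambda_i\rbrace<n$ bounds the number of Hilbert basis summands in any decomposition of $\br$ by $n-1$, yielding $2n-1$ at once; the further saving to $2n-2$ is a separate refinement of this grading count, not a dimension descent.
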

	The lower bound in \citep{brunsgubehenkcounterexampleintcara99} and Seb\H{o}'s upper bound remain the best known bounds since decades. It is open whether both bounds can be improved. Another challenge concerning the integer Carath\'{e}odory rank is to classify and identify cones that admit $\Cr(C) = n$, the minimal possible integer Carath\'{e}odory rank for full-dimensional cones. Throughout this paper, we refer to a cone $C$ with $\Cr(C) = n$ that $C$ has the \textit{integral Carath\'{e}odory property} and abbreviate this by (ICP). One of the important results concerning the (ICP) is again due to Seb\H{o}:
	\begin{theoreml}[{\cite[Theorem 2.2]{sebohilbertbasisdreidim90}}]
		\label{thm_sebo_three}
		Let $C\subseteq\R^n$ be a full-dimensional pointed rational cone with $n\leq 3$. Then 
		\begin{align*}
			\Cr(C)= n.
		\end{align*}
	\end{theoreml}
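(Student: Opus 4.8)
The plan is to deduce the theorem from the a priori stronger statement that, when $n\le 3$, every full-dimensional pointed rational cone $C\subseteq\R^n$ admits a \emph{unimodular Hilbert covering}: a finite family of simplicial cones $\sigma_1,\dots,\sigma_N$ with $C=\bigcup_i\sigma_i$, each $\sigma_i$ generated by $n$ elements of the Hilbert basis $H$ of $C$ and each with $\lvert\det\sigma_i\rvert=1$. This suffices: if $\bz\in C\cap\Z^n$ lies in $\sigma_i=\cone(\bh_1,\dots,\bh_n)$ with $\bh_1,\dots,\bh_n\in H$ and $\lvert\det(\bh_1,\dots,\bh_n)\rvert=1$, then $\bh_1,\dots,\bh_n$ is a $\Z$-basis of $\Z^n$, so the (a priori merely non-negative rational) coefficients in $\bz=\sum_i\lambda_i\bh_i$ are forced to be non-negative integers; hence $\Cr(C)\le n$, while $\Cr(C)\ge n$ for any full-dimensional cone because an interior lattice point of $C$ (which exists, as $C$ is full-dimensional) cannot be a non-negative combination of fewer than $n$ extreme-ray vectors.

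The cases $n=1,2$ are classical. For $n=1$, $C$ is a ray and $H$ is a single primitive vector. For $n=2$ I would invoke the continued-fraction / Klein-polygon description of the Hilbert basis of a planar cone: writing $H=\{\bh_0,\dots,\bh_m\}$ ordered by angle, with $\bh_0,\bh_m$ the primitive generators of the extreme rays, consecutive pairs $\bh_{i-1},\bh_i$ necessarily satisfy $\lvert\det(\bh_{i-1},\bh_i)\rvert=1$ — otherwise the planar subcone $\cone(\bh_{i-1},\bh_i)\subseteq C$ would have a Hilbert basis element in its relative interior, which one checks is also a Hilbert basis element of $C$ with direction strictly between those of $\bh_{i-1}$ and $\bh_i$, contradicting consecutiveness. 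The unimodular subcones $\cone(\bh_{i-1},\bh_i)$ then cover $C$.

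The substantial case is $n=3$, where I would build the covering by successive refinement. Begin with any triangulation of $C$ into simplicial cones generated by the primitive extreme-ray vectors (which lie in $H$). As long as some cell $\sigma=\cone(\bh_1,\bh_2,\bh_3)$ has $\lvert\det\sigma\rvert\ge 2$, subdivide $\sigma$ from a Hilbert basis element of $C$ lying in $\sigma$ but not in $\Z_{\ge0}\bh_1+\Z_{\ge0}\bh_2+\Z_{\ge0}\bh_3$; since the determinants of the resulting pieces are positive integers summing to $\lvert\det\sigma\rvert$ and each strictly smaller, the non-negative integer $\sum_i(\lvert\det\sigma_i\rvert-1)$ strictly decreases, so the process terminates at a unimodular Hilbert covering. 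Everything therefore reduces to the following. \textbf{Key Lemma.} If $\sigma=\cone(\bh_1,\bh_2,\bh_3)\subseteq\R^3$ is simplicial with $\bh_1,\bh_2,\bh_3\in H$ and $\lvert\det\sigma\rvert\ge 2$, then $\sigma$ contains a Hilbert basis element of $C$ outside $\Z_{\ge0}\bh_1+\Z_{\ge0}\bh_2+\Z_{\ge0}\bh_3$. To prove it I would fix a linear functional $\bc$ positive on $C\setminus\{\bm{0}\}$, take a nonzero lattice point $\bg$ in the half-open fundamental parallelepiped of $\sigma$ (which exists since $\sigma$ is not unimodular) with $\bc^\top\bg$ minimal, observe that minimality forces $\bg$ to be irreducible \emph{within} $\sigma$, and then do the delicate work of upgrading ``irreducible in $\sigma$'' to ``lies in $H$'' — or else of locating a different bad point of $\sigma$ that lies in $H$ — by analysing how a hypothetical reduction $\bg=\bg'+\bh$ ($\bh\in H$, $\bg'\in C\cap\Z^3\setminus\{\bm{0}\}$) in the ambient cone must interact with the three facets of $\sigma$.

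This last upgrade is where I expect the real difficulty, and it is genuinely three-dimensional. The tempting shortcut ``a simplicial cone over an empty lattice triangle is unimodular'' is false — the cone over $\conv\{(1,0,0),(0,1,0),(1,1,2)\}$ has determinant $2$ — so the argument must exploit that the three generators are Hilbert basis elements \emph{of $C$} and that, in $\R^3$, the bad lattice points of $\sigma$ and the summands of a reduction can be pushed onto two-dimensional faces where the planar analysis of the case $n=2$ applies. That no such result can hold in every dimension is consistent with the hypothesis $n\le 3$: by the cones of Bruns et al.\ recalled in the introduction the integral Carath\'eodory property, hence any unimodular Hilbert covering, fails once $n$ is large enough. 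If the refinement bookkeeping proves unwieldy, a reasonable alternative is a direct induction on $\bc^\top\bz$: given $\bz\in C\cap\Z^3\setminus H$, peel off a carefully chosen $\bh\in H$ with $\bz-\bh\in C$ so that a representation of $\bz-\bh$ by at most three Hilbert basis elements can be enlarged by $\bh$ without exceeding three — but choosing $\bh$ correctly requires a case analysis I expect to be of comparable difficulty to the Key Lemma.
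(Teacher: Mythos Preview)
The paper does not contain a proof of this result: Theorem~\ref{thm_sebo_three} is quoted from Seb\H{o}'s 1990 paper and used only as a black box (in Corollary~\ref{cor_three_coset} and in the proof of Theorem~\ref{thm_simplicial_icr}). There is therefore no proof here to compare your attempt against.

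On the merits of your sketch: the reduction to a unimodular Hilbert cover is the right shape, and $n\le 2$ is fine. For $n=3$, however, the refinement argument has a concrete error in the termination step, separate from the gap you already flag. You assert that after stellar subdivision of $\sigma=\cone(\bh_1,\bh_2,\bh_3)$ at $\bg=\lambda_1\bh_1+\lambda_2\bh_2+\lambda_3\bh_3$ the determinants of the pieces ``sum to $\lvert\det\sigma\rvert$''. They do not: by multilinearity $\lvert\det(\bg,\bh_2,\bh_3)\rvert=\lambda_1\lvert\det\sigma\rvert$ and symmetrically, so the sum of the new multiplicities is $(\lambda_1+\lambda_2+\lambda_3)\,\lvert\det\sigma\rvert$, which equals $\lvert\det\sigma\rvert$ only when $\bg\in\aff\{\bh_1,\bh_2,\bh_3\}$. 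For the Reeve-type cone $\sigma=\cone\bigl(\be^1,\be^2,(1,1,3)^T\bigr)$ with $\lvert\det\sigma\rvert=3$, the Hilbert basis element $\bg=(1,1,1)^T$ has $(\lambda_1,\lambda_2,\lambda_3)=(\tfrac23,\tfrac23,\tfrac13)$; the three pieces have multiplicities $2,2,1$, and your potential $\sum_i(\lvert\det\sigma_i\rvert-1)$ stays at $2$ rather than strictly decreasing. (Subdividing instead at $(1,1,2)^T$ does work here, but your argument does not single out such a $\bg$.) A correct termination argument thus needs a sharper choice of subdivision point together with a proof that a suitable point always lies in $H\cap\sigma$ --- and that requirement folds straight back into the Key Lemma you already identify as the hard step. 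So the proposal has one acknowledged gap (upgrading ``irreducible in $\sigma$'' to ``lies in $H$'') and one unacknowledged one (termination of the refinement), and both sit exactly at the genuinely three-dimensional core of Seb\H{o}'s argument.
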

	From the counterexamples in \citep{brunsgubehenkcounterexampleintcara99} it is known that cones with $n\geq 6$ do not have the (ICP), in general. It is open whether cones in dimension $n\in\lbrace 4,5\rbrace$ admit the (ICP). 
	To prove both theorems, Theorem \ref{thm_sebo_icr} and Theorem \ref{thm_sebo_three}, Seb\H{o} relied on simplicial cones, i.e., full-dimensional cones that have precisely $n$ extreme rays; see Section \ref{ss_notation_hilbert} for a more general definition. He employed the following high-level strategy:
	\begin{enumerate}
		\item Reduce from the general cone $C$ to a special simplicial subcone $C'\subseteq C$.
		\item Exploit the simplicial structure of $C'$.
	\end{enumerate}
	In fact, this strategy is commonly utilized when proving results concerning $\Cr(C)$ or other variants of this problem; cf., for instance, with \cite[Chapter 3]{brunsgubeladzepolyringsktheory2009}. This leads to the following open intriguing question:
	\begin{center}
		 \textit{Does every simplicial cone have the (ICP)?}
	\end{center}
	An affirmative answer has far-reaching consequences: It implies that the zonotope spanned by the integral generators of a cone contains a set of integral vectors, not necessarily the Hilbert basis, that admits the (ICP). 
	
	We contribute to the question of every simplicial cone having the (ICP) by investigating simplicial cones with respect to their multiplicity. In the following, we make precise what we mean by multiplicity in the full-dimensional case; see Section \ref{ss_notation_hilbert} for a more general definition. Let $\br^1,\ldots,\br^n\in\Z^n$ be linearly independent vectors. Then $\pos\lbrace \br^1,\ldots,\br^n\rbrace$ is a full-dimensional simplicial cone, where $\pos X$ denotes the set of all finite non-negative combinations of elements in $X\subseteq\R^n$. The \textit{multiplicity of $\pos\lbrace \br^1,\ldots,\br^n\rbrace$} is given by $\left|\det(\br^1,\ldots,\br^n)\right|$. This quantity has various interpretations, for instance, it is known that $\left|\det(\br^1,\ldots,\br^n)\right|$ coincides with the number of integer vectors contained in the half-open parallelepiped spanned by $\br^1,\ldots,\br^n$; see, e.g., \cite[Lemma 2]{sebohilbertbasisdreidim90} for a proof.
	
	One of our main results states that simplicial cones with small multiplicity have the (ICP).
	\begin{theorem}
		\label{thm_simplicial_icp}
		Let $\br^1,\ldots,\br^n\in\Z^{n}$ satisfy $1\leq\left|\det (\br^1,\ldots,\br^n)\right|\leq 5$. Then 
		\begin{align*}
			\Cr(\pos\lbrace \br^1,\ldots,\br^n\rbrace)=n.
		\end{align*}
	\end{theorem}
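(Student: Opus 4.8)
The plan is to normalize the cone, describe its Hilbert basis through the finite abelian group $G=\Z^n/\Lambda$ with $\Lambda=\Z\br^1+\dots+\Z\br^n$, and reduce the bound $\Cr(\pos\{\br^1,\dots,\br^n\})\le n$ (the reverse inequality is immediate for a full-dimensional cone) to a finite number-theoretic statement about $G$, checkable once $|G|=m\le5$. We may assume each $\br^i$ is primitive, since replacing $\br^i$ by the primitive vector on its ray leaves the cone unchanged and can only decrease $m$; and, since $\Cr$ and $m$ are $\GL_n(\Z)$-invariant, that $(\br^1\mid\dots\mid\br^n)$ is in Hermite normal form, so at most two diagonal entries exceed $1$ and $G$ is a fixed group of order $m\le5$, cyclic unless $m=4$ with $G\cong(\Z/2\Z)^2$. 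For $g\in G$ let $h_g$ be the unique lattice point representing $g$ in the half-open fundamental parallelepiped $\Pi=\{\sum_i\lambda_i\br^i:0\le\lambda_i<1\}$. A short argument (subtract a generator whenever an $\br$-coordinate is $\ge1$) shows that the Hilbert basis is $\mathcal H=\{\br^1,\dots,\br^n\}\cup\{h_g:g\in D\}$, where $D$ is the set of nonzero $g$ such that no nonzero $g'\ne g$ is coordinate-wise dominated by $g$ in the $\br$-basis.

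The key reduction: given $\bz\in C\cap\Z^n$, there are unique $g\in G$ and $p_1,\dots,p_n\in\Z_{\ge0}$ with $\bz=h_g+\sum_i p_i\br^i$, and $g$ is the coset of $\bz$. If $g=0$ or some $p_i=0$ then $\bz$ is already a non-negative integer combination of at most $n$ Hilbert basis elements, so assume $g\ne0$ and all $p_i\ge1$. An $\mathcal H$-representation of $\bz$ with support a set $T$ of $n$ linearly independent Hilbert basis vectors amounts to $\bz\in\pos T\cap\sum_{\bh\in T}\Z\bh$, and for each $g'\in D$ one has the relation $m\,h_{g'}=\sum_i e^{(g')}_i\br^i$, where $e^{(g')}\in\{0,\dots,m-1\}^n$ is the $\br$-coordinate vector of $h_{g'}$ scaled by $m$. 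The task becomes: choose non-negative integers $t_{g'}$ $(g'\in D)$ with $\sum_{g'}t_{g'}g'=g$ in $G$ such that $\bz-\sum_{g'}t_{g'}h_{g'}=\sum_i q_i\br^i$ has all $q_i\ge0$ and at most $n-\#\{g':t_{g'}>0\}$ of the $q_i$ positive. Partitioning $\{1,\dots,n\}$ into the level sets $B_c=\{i:e^{(g)}_i=c\}$ — on each of which every relation has a constant coefficient — this is a small integer feasibility problem: clear the residual on all but one (or two) blocks while keeping the multiplicities non-negative.

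One then settles this feasibility by cases on $m$ and, for $m=4$, on the group type. For $m\le3$ and for $m=4$ with $G\cong(\Z/2\Z)^2$ the order-$2$ relations have coefficients in $\{0,1\}$ and the blocks and cosets are few enough that one relation, or the single complementary-pair relation $h_{g'}+h_{-g'}=\sum_{i\in\supp e^{(g')}}\br^i$, already clears a block, in the spirit of Seb\H{o}'s low-dimensional argument; the case $m=4$, $G\cong\Z/4\Z$, has one relation with coefficients up to $3$ and needs a slightly longer check. The decisive case is $m=5$, $G\cong\Z/5\Z$: here $h_g$ can have four nonempty level sets $B_1,\dots,B_4$, the relations available are $m\,h_{jg}=\sum_i\big((jc)\bmod5\big)\br^i$ (coefficient $(jc)\bmod5$ on $B_c$) for those $j\in\{1,2,3,4\}$ with $h_{jg}\in\mathcal H$, and one must first use the domination criterion to pin down which $h_{jg}$ survive and then solve the resulting Frobenius-type system. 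After quotienting by the symmetries — permuting the $\br^i$, rescaling the generator of $G$ by a unit of $\Z/5\Z$, and replacing $g$ by $-g$ — only a bounded list of block patterns remains, and for each one exhibits explicit non-negative multiplicities clearing all but one block.

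The main obstacle is this last step. For $m=5$ one must simultaneously control the domination structure (to ensure enough relations $h_{jg}\in\mathcal H$ are available) and the integral feasibility of the block-clearing system; the patterns with three or four nonempty blocks, or with few surviving $h_{jg}$, are where the argument is tight. This is precisely where the hypothesis $m\le5$ is used — for $m\ge6$ the analogous feasibility can fail, consistent with the integral Carath\'eodory property being open or false in that range.
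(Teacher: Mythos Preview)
Your approach is genuinely different from the paper's. The paper never attempts the direct coefficient manipulation you propose; instead it relies on a projection lemma (Lemma~3.1): whenever two dual generators $(\br^i)^*,(\br^j)^*$ lie in the same coset of $(L\cap\Z^n)^*$, one may pass to a facet or a projection and drop the dimension by one. For $|\det\bR|\le 4$ this, together with Seb\H{o}'s result in dimension $\le 3$, already finishes the proof. For $|\det\bR|=5$ the projection reduces to the single case $n=4$ with all four nontrivial cosets distinct, and the paper then settles that case by exhibiting an explicit unimodular cover of $\pos\bR$ by eighteen simplicial subcones generated by Hilbert basis elements, verified by a volume count. Your approach would give a more uniform, purely arithmetic argument --- if it worked.

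The gap is in your ``small integer feasibility problem.'' Subtracting $\sum_{g'} t_{g'}h_{g'}$ shifts every $q_i$ in a fixed level set $B_c$ by the \emph{same} amount, but the $p_i$ within $B_c$ need not coincide; you therefore cannot ``clear a block'' but can at best zero out one $q_i$ per block, namely the one attaining $P_c:=\min_{i\in B_c}p_i$. So the genuine feasibility question is: given any $(P_c)_c\in\Z_{\ge 1}^{\le 4}$, find $(t_j)\ge 0$ with the correct coset sum, with $s_c\le P_c$ everywhere, and with $s_c=P_c$ for at least $\#\{j:t_j>0\}$ values of $c$. This depends on the unbounded integers $P_c$, not merely on a ``bounded list of block patterns,'' and you do not verify it. In the cyclic $m=5$ case with all four blocks present this verification is \emph{exactly} the ICP for the specific four-dimensional cone the paper isolates --- the hard core of the theorem --- and it does not follow from the soft symmetry reductions you list. (A smaller issue: when some $p_i=0$ you use $h_g$ as a Hilbert basis element, but $h_g\in\mathcal H$ only when $g\in D$; the dominated case needs its own treatment.) As written, the proposal identifies the right structure but stops precisely where the actual proof begins.
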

	We also provide a novel bound on the integer Carath\'{e}odory rank that improves upon Seb\H{o}'s bound, Theorem \ref{thm_sebo_icr}, if the multiplicity is bounded by the dimension, that is, $\left|\det(\br^1,\ldots,\br^n)\right|\leq n$.
	\begin{theorem}
		\label{thm_simplicial_icr}
		Let $\br^1,\ldots,\br^n\in\Z^{n}$ satisfy $6\leq\left|\det (\br^1,\ldots,\br^n)\right|\leq n$. Then
		\begin{align*}
			\Cr(\pos\lbrace\br^1,\ldots,\br^n\rbrace)\leq n + \left| \det (\br^1,\ldots,\br^n)\right| - 3.
		\end{align*}
	\end{theorem}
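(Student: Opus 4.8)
The plan is to carry out the ``exploit the simplicial structure'' step directly on $C:=\pos\{\br^1,\ldots,\br^n\}$ via its half‑open fundamental parallelepiped. First I would reduce to the case that every $\br^i$ is primitive: replacing each $\br^i$ by the primitive lattice vector on its ray leaves $C$ unchanged and only divides the multiplicity $m:=\lvert\det(\br^1,\ldots,\br^n)\rvert$; if the new multiplicity $\bar m$ satisfies $\bar m\le 4$ then the argument below yields $\Cr(C)\le n+2\le n+m-3$ (using $m\ge 6$), and if $\bar m\ge 5$ it yields $\Cr(C)\le n+\bar m-3\le n+m-3$. So assume the $\br^i$ primitive, put $\Pi:=\{\sum_i\mu_i\br^i:0\le\mu_i<1\}$, write $\mu_i(\bx)$ for the $\br^i$-coordinate of $\bx$, and recall $\lvert\Pi\cap\Z^n\rvert=m$ by \cite[Lemma~2]{sebohilbertbasisdreidim90}.

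Step one is to pin down the Hilbert basis $H$ of $C$. Each $\br^i$ lies in $H$: a lattice decomposition $\br^i=\bu+\bv$ with $\bu,\bv\in C$ forces $\bu,\bv$ onto the extreme ray $\R_{\ge 0}\br^i$, contradicting primitivity. And for $\bh\in H$ one has $\mu_i(\bh)\le 1$ for all $i$ — otherwise $\bh=\br^i+(\bh-\br^i)$ is a proper lattice decomposition — with equality only if $\bh=\br^i$. Hence $H=\{\br^1,\ldots,\br^n\}\sqcup B$ for some $B\subseteq(\Pi\cap\Z^n)\setminus\{\mathbf 0\}$, so $\lvert B\rvert\le m-1$.

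Step two is the reduction of a general $\bz\in C\cap\Z^n$. Writing $\bz=\sum_i\lambda_i\br^i$ with $\lambda_i\ge 0$ (unique, the $\br^i$ being a basis) and $\bz':=\bz-\sum_i\lfloor\lambda_i\rfloor\br^i\in\Pi\cap\Z^n$, we get $\bz=\sum_{i:\lfloor\lambda_i\rfloor\ge 1}\lfloor\lambda_i\rfloor\br^i+\bz'$, whose first part uses at most $n$ elements of $H$. Any Hilbert‑basis representation of the box point $\bz'$ uses no $\br^k$ — such a summand would force the $\br^k$-coordinate of $\bz'$, which is $<1$, to be at least $1$ — so it uses only elements of $B$; let $t(\bz')$ denote the least number of distinct elements of $B$ needed. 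Since $B$ is disjoint from $\{\br^1,\ldots,\br^n\}$, this gives $\Cr(C)\le n+\max\{t(\bp):\bp\in(\Pi\cap\Z^n)\setminus\{\mathbf 0\}\}$.

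The heart of the proof — and the only place where I expect real work, and where the constant $3$ enters — is the estimate $t(\bp)\le m-3$ for every nonzero box point $\bp$. If $\lvert B\rvert\le m-3$ this is immediate from $t(\bp)\le\lvert B\rvert$. If $\lvert B\rvert=m-1$ then every nonzero box point lies in $B$ and $t(\bp)=1$. If $\lvert B\rvert=m-2$ then exactly one nonzero box point $\bp_0$ fails to lie in $B$; being a decomposable lattice point, $\bp_0=\bu+\bv$ with $\bu,\bv\neq\mathbf 0$, and from $0\le\mu_i(\bu),\mu_i(\bv)$ and $\mu_i(\bu)+\mu_i(\bv)=\mu_i(\bp_0)<1$ both $\bu$ and $\bv$ are again box points, hence in $B$, so $t(\bp_0)\le 2$. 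Since $m\ge 6$ gives $m-3\ge 3\ge 2$, we obtain $t(\bp)\le m-3$ in all cases, whence $\Cr(C)\le n+m-3$. The obstacle to get right is precisely this case distinction: one must rule out that $B$ is simultaneously nearly all of the $m-1$ nonzero box points and yet some box point needs many elements of $B$, which is exactly what the decomposability argument prevents. (Note that the hypothesis $m\le n$ is not needed for the bound itself; it only demarcates the regime $n+m-3\le 2n-3$ in which it improves on Theorem~\ref{thm_sebo_icr}.)
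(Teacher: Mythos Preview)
Your argument is correct, but it is genuinely different from the paper's proof. The paper exploits the hypothesis $m:=|\det\bR|\le n$ via pigeonhole on the cosets $(\br^i)^*+\Z^n$ in $(\bR\Z^n)^*/\Z^n$: since there are $n$ vectors and only $m\le n$ cosets, either some $(\br^i)^*\in\Z^n$ or some $(\br^i)^*-(\br^j)^*\in\Z^n$, and Lemma~\ref{lemma_bounded_complexity} then reduces the dimension by one while Lemma~\ref{lemma_projection_coset} guarantees the multiplicity does not increase. Iterating until the dimension drops to $m-1$ and then invoking Seb\H{o}'s bound (Theorem~\ref{thm_sebo_icr}) gives $\Cr\le (n-(m-1))+2(m-1)-2=n+m-3$. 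Your route bypasses all of this machinery: you bound directly the number of Hilbert basis elements inside the fundamental box by $m-1$, and the short case distinction on $|B|\in\{m-1,\,m-2,\,\le m-3\}$ shows every box point needs at most $\max(2,m-3)$ of them. This is more elementary---it uses neither the projective lemma nor Seb\H{o}'s theorem---and, as you observe, it never uses $m\le n$, so it actually proves the inequality $\Cr(\pos\bR)\le n+m-3$ for every simplicial cone with multiplicity $m\ge 5$. What the paper's approach buys in exchange is that the same projection/coset framework drives Theorem~\ref{thm_simplicial_icp}, Corollary~\ref{cor_three_coset}, and the results of Section~\ref{sec_special_cones}; your counting argument is tailored to this one inequality. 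A minor presentational point: after primitivising you reuse the symbol $m$ for $\bar m$ and later write ``$m\ge 6$'' where ``$\bar m\ge 5$'' is what the case $|B|=\bar m-2$ actually needs---harmless, since your opening paragraph already treats $\bar m\le 4$ separately, but worth tidying.
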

	We allude that these statements resemble recent results on the integer Carath\'{e}o\-dory rank and the (ICP) when the simplicial cone is given as a polyhedral representation \cite[Theorem 4]{aliev2024new}. Although these results seem to be related, their proofs differ significantly. Also, we remark that it will become evident from the proofs that both results remain valid for $k$-dimensional simplicial cones in $\R^n$ with the respective notion of multiplicity; see Section \ref{ss_notation_hilbert} for an introduction of these concepts. 

	An immediate follow-up question is to ask whether these bounds generalize to non-simplicial cones. We emphasize that this is not the case for Theorem \ref{thm_simplicial_icp}: A natural generalization of the multiplicity for a full-dimensional non-simplicial cone $\pos\lbrace \br^1,\ldots,\br^t\rbrace$ given by $\br^1,\ldots,\br^t\in\Z^n$ is to define $\delta(\br^1,\ldots,\br^t)$ as the largest multiplicity of any simplicial subcone formed by $\br^1,\ldots,\br^t$, that is,
	\begin{align*}
		\delta(\br^1,\ldots,\br^t) := \max \left\lbrace \left|\det(\br^{i_1},\ldots,\br^{i_n})\right| : i_1,\ldots,i_n\in\lbrack t\rbrack\right\rbrace.
	\end{align*}
	If we assume that $\delta(\br^1,\ldots,\br^t) = 1$, then the cone $\pos\lbrace \br^1,\ldots,\br^t\rbrace$ satisfies a strong property: It is possible to construct a unimodular triangulation of $\pos\lbrace \br^1,\ldots,\br^t\rbrace$ using the vectors in $\lbrace \br^1,\ldots,\br^t\rbrace$. This implies that cones with $\delta(\br^1,\ldots,\br^t) = 1$ have the (ICP); see Section~\ref{ss_notation_hilbert} for a definition and discussion of unimodular triangulations. 
	However, in contrast to Theorem \ref{thm_simplicial_icp}, one example given in \citep{brunsgubehenkcounterexampleintcara99} has a representation that only uses $0/1$-vectors as generators. Those vectors collectively satisfy $\delta(\br^1,\ldots,\br^t) = 3$. So already cones with $\delta(\br^1,\ldots,\br^t) = 3$ do not have the (ICP). This demonstrates that the simplicial case contains more structure. 
	We emphasize that it is open whether cones with $\delta(\br^1,\ldots,\br^t) = 2$ have the (ICP).
	
	The key technique when proving Theorems \ref{thm_simplicial_icp} and \ref{thm_simplicial_icr} is a projection argument, Lemma \ref{lemma_bounded_complexity}, which we devote an extra section, Section \ref{sec_projective_lemma}. Afterwards, we prove our main results in Section \ref{section_main_proofs}. As an application of Lemma \ref{lemma_bounded_complexity}, one can collect various well-known cones that have the (ICP); see Corollary \ref{cor_three_coset} and Section \ref{sec_special_cones}. These include among others special instances of dual cones of simplicial Gorenstein cones.
	
	\subsection{Related work}
	Despite the existence of counterexamples to the question whether every cone has the (ICP), it remains an active research area to bound the integer Carath\'{e}odory rank and identify cones with minimal integer Carath\'{e}odory rank. In \citep{gijswijtipdicp2012}, the authors show that cones that arise from polyhedra with additional properties, e.g., polyhedra defined by a totally unimodular constraint matrix or (poly)ma\-troid base polytopes have the (ICP). 
	The latter improved upon an earlier result, which states that $\Cr(C)\leq  n + r(M) - 1$, where $C$ is given by the incidence vectors of bases of a matroid $M$ and $r(M)$ denotes the rank of $M$ \citep{pinamatroidcararank2003}. 
	More recent bounds on the integer Carath\'eodory rank and novel instances that admit the (ICP) are given in \citep{aliev2024new}. For those results, the authors assume that the cone is represented as the intersection of half-spaces.
	
	Seb\H{o} introduced stronger versions of the (ICP) \citep{sebohilbertbasisdreidim90}. He asked whether the Hilbert basis of a given cones gives a unimodular cover of the cone or, even stronger, a unimodular triangulation; see Section~\ref{ss_notation_hilbert} for a definition and discussion of unimodular covers and triangulations. Both questions turned out to be false since they would imply the (ICP). Nevertheless, this led to the search of quantitative upper bounds on the size of a set that admits a unimodular cover or triangulation; see \citep[Chapter 3B and 3C]{brunsgubeladzepolyringsktheory2009} or \citep{brunsthadenunimodtriangbound2017, thadenunimodtriangbound2021} for some more recent work. This work relies on carefully analyzing the simplicial case. 
	
	Bruns and Gubeladze introduced a notion of an asymptotic integer Carath\'{e}odory rank. They proved an upper bound of $2n - 3$ on this asymptotic version of the integer Carath\'{e}odory rank \citep{brunsgubeladzenormalpoly1999}. Recently, this was improved to $\lfloor\frac{3n}{2}\rfloor$ in \citep[]{aliev2024new}. This is the first improvement on any version of the integer Carath\'{e}odory rank that does not have the factor $2$ in front of $n$. In \citep{gubeladzesurveynormal2023}, the author conjectured that the $2$ in front of the $n$ in Theorem \ref{thm_sebo_icr} can not be improved for the integer Carath\'eodory rank of so-called normal polytopes. 
	
	Instead of analyzing the integer Carath\'{e}odory rank with respect to the Hilbert basis of a cone, it is also of interest to study the integer Carath\'{e}odory rank for general generating sets. This relates to studying the sparsity of feasible non-zero integral vectors to systems of linear Diophantine equations. 
	A first contribution to this is due to \citep{eisenbrandshmonincaratheodorybounds06}. Over the last years, more results on this topic and the related question of finding sparse integer optimal solutions were obtained; see \citep{alievdeloesparelindio2017,alideloeisoerweissupportint2018,alievAverkovLoeraOertel21,dubeyliu2023shortproofsparse}, to name a few.
	
	\section{Notation and definitions}\label{sec_notation_def}
	In this section, we introduce additional notation and definitions that we use throughout the paper. In particular, we define the notion of Hilbert bases, integer Carath\'{e}o\-dory rank, the multiplicity, unimodular covers, and unimodular triangulations. Moreover, we introduce lattices and discuss a projected version of the integer Carath\'{e}odory rank. 
	We abbreviate $\lbrack m\rbrack :=\lbrace 1,\ldots,m\rbrace$. By $\intt X$ we denote the interior of a set $X$ and $\lin X$ is the linear hull of $X$. The standard unit vectors of $\R^n$ are denoted by $\be^1,\ldots,\be^n$. Let $\br^1,\ldots,\br^k\in\Z^n$. We abbreviate the linear hull $\lin\lbrace \br^1,\ldots,\br^k\rbrace$ by $\lin \bR$ for $\bR=(\br^1,\ldots,\br^k)\in\Z^{n\times k}$. Similarly, we write $\pos\bR$ for $\pos\lbrace \br^1,\ldots,\br^k\rbrace$.
	
	\subsection{Hilbert bases, integer Carath\'{e}odory rank, multiplicity, and unimodular covers/triangulations}\label{ss_notation_hilbert}
	A cone $C$ is pointed if $\bm{0}$ is the vertex of $C$. The \textit{Hilbert basis} of a pointed rational cone $C$ with respect to $\Z^n$ is the minimal set of integral vectors in $C$ such that every element in $C\cap \Z^n$ is a non-negative integral combination of the elements in the set. We denote the Hilbert basis of $C$ by $\mathcal{H}(C)$. The integer vectors in the Hilbert basis are called \textit{Hilbert basis elements}. It is known that the Hilbert basis is finite and unique for each pointed rational cone $C$; see \citep{vandercorputhilberbasisunique1931}. Moreover, a Hilbert basis element can not be written as the sum of two non-zero integral vectors in $C\cap\Z^n$. 
	The \emph{integer Carath\'{e}odory rank of $C$} is defined as
	\begin{align*}
		\Cr(C) := \max_{\bz\in C\cap\Z^n}\min\left\lbrace k  : \bz = \sum_{i=1}^k\lambda_i\bh^i \text{ with }\lambda_i\in\N\text{, }\bh^i\in\mathcal{H}(C)\text{ for all }i\in\lbrack k\rbrack\right\rbrace.
	\end{align*}
	Let $\br^1,\ldots,\br^k\in\Z^n$ be linearly independent and $\bR = (\br^1,\ldots,\br^k)$. Then the $k$-dimensional cone $\pos\bR$ is referred to as $\emph{simplicial}$. We define by
	\begin{align*}
		\parr\lbrace\br^1,\ldots,\br^k\rbrace := \left\lbrace \bx\in\R^n : \sum_{i=1}^k\lambda_i \br^i, \lambda_i\in \lbrack 0,1) \text{ for all }i\in\lbrack k\rbrack\right\rbrace\cap \Z^n
	\end{align*}
	the set of integer vectors in the $k$-dimensional half-open parallelepiped spanned by $\br^1,\ldots,\br^k$. According to the previously introduced notation, we abbreviate $\parr \lbrace\br^1,\ldots,\br^k\rbrace$ by $\parr\bR$.  
	The \emph{multiplicity of $\pos\lbrace \br^1,\ldots,\br^k\rbrace$} is given by the number of integer vectors contained in the half-open parallelepiped spanned by $\br^1,\ldots,\br^k$, that is, $\left| \parr\lbrace\br^1,\ldots,\br^k\rbrace\right|$. We write $\Delta(\br^1,\ldots,\br^k)$ for the multiplicity. Observe that $\Delta(\br^1,\ldots,\br^n) = \left|\det(\br^1,\ldots,\br^n)\right|$ when $k = n$; see, e.g., \cite[Lemma 2]{sebohilbertbasisdreidim90} for a proof. If $\Delta(\br^1,\ldots,\br^k) = 1$, we call $\pos\bR$ \emph{unimodular}. 
	Given a finite set $S\subseteq \pos\bR \cap \Z^n$, then a \emph{cover of $\pos\bR$ using $S$} is given by a collection of simplicial cones $\mathcal{C}$ such that $\pos\bR\subseteq \bigcup_{C\in\mathcal{C}} C$ and for each $C\in\mathcal{C}$ the generators can be chosen to be elements in $S$. If in addition to this each $C\in \mathcal{C}$ is unimodular, we refer to $\mathcal{C}$ as a \emph{unimodular cover of $\pos\bR$ using $S$}. A \emph{triangulation of $\pos\bR$ using $S$} is given by a collection of simplicial cones $\mathcal{T}$ such that $\pos\bR\subseteq \bigcup_{T\in\mathcal{T}} T$, for each $T\in\mathcal{T}$ the generators can be chosen to be elements in $S$, and any pair $T$,$T'\in \mathcal{T}$ intersect in a common face of $T$ and $T'$. Similarly as before, we call $\mathcal{T}$ a \emph{unimodular triangulation of $\pos\bR$ using $S$} if each $T\in\mathcal{T}$ is unimodular. The important connection between unimodular covers/triangulations and the integer Carath\'eodory rank is given by the following observation: Let $C$ be pointed rational cone and $S\subseteq \mathcal{H}(C)$. If there exists a unimodular cover or even a unimodular triangulation of $C$ using $S$, then this implies that $C$ has the (ICP). This is true because unimodular cones have the (ICP) and every integral vector in $C$ is contained in a unimodular cone spanned by the Hilbert basis elements since $S\subseteq \mathcal{H}(C)$. It is known that having a unimodular triangulation given by the Hilbert basis elements is a strictly stronger property than having a unimodular cover given by the Hilbert basis elements, meaning there are cones that have a suitable unimodular cover but admit no unimodular triangulation \citep{bouvierspringberg1992triangulationstrongerthancover,firlaziegler99hilberttriangandcover}. Similarly, having a unimodular cover given by the Hilbert basis elements is a strictly stronger property than having the (ICP) \citep{bruns2007integralcara}. 
	For an exposition concerning unimodular covers and triangulations, we refer the reader to \citep[Section 9.3]{deloerarambausantostriangulations2010}.

	\subsection{Lattices}\label{ss_notation_lattices}
	We introduce lattices and discuss several useful properties that we need throughout the paper; see \citep{lekkerkerker2014geometry} for an introduction to lattices. A \textit{lattice} $\Lambda$ is a discrete subgroup of $\R^n$. A basis of $\Lambda$ consists of $k$ linearly independent vectors $\bb^1,\ldots,\bb^k\in \Lambda$ such that $\Lambda = (\bb^1,\ldots,\bb^k)\Z^k$. If $k = n$, the lattice is full-dimensional. The determinant of $\Lambda$ is given by $\det\Lambda = \sqrt{\det\bB^T\bB}$ for $\bB = (\bb^1,\ldots,\bb^k)$. Let $\tilde{\Lambda}\subseteq\Lambda$ be a \textit{sublattice}, then the quotient group $\Lambda / \tilde{\Lambda}$ is finite and the cardinality of $\Lambda / \tilde{\Lambda}$ equals $\frac{\det \tilde{\Lambda}}{\det\Lambda}$. 
	The vectors $(\bb^1)^*,\ldots,(\bb^k)^*\in \lin\Lambda$ form the \textit{dual basis} of $\bb^1,\ldots,\bb^k$ if 
	\begin{align*}
		(\bb^i)^T(\bb^j)^* = \begin{cases}
			1, \text{ for } i = j,\\
			0, \text{ otherwise }
		\end{cases}.
	\end{align*}
	The set
	\begin{align*}
		\Lambda^* := \lbrace \bx\in \lin \Lambda : \by^T\bx\in \Z \text{ for all }\by\in\Lambda\rbrace
	\end{align*}
	is a lattice and referred to as the \emph{dual lattice of $\Lambda$}. 
	Note that $(\bb^1)^*,\ldots,(\bb^k)^*\in \Lambda^*$ is a basis of $\Lambda^*$ if $\Lambda = (\bb^1,\ldots,\bb^k)\Z^k$ for $i\in\lbrack k\rbrack$. Moreover, we have $\det \Lambda^* = (\det \Lambda)^{-1}$. In the case $k = n$, the dual basis of $\bB= (\bb^1,\ldots,\bb^n)$ is given by the columns of $\bB^{-T}$. 
	
	\subsection{Projections and the integer Carath\'{e}odory rank}\label{ss_notation_proj}
	Let $M\subseteq \R^n$ be a linear subspace. For a set $X\subseteq \R^n$, we denote by $X|M$ the orthogonal projection of $X$ onto $M$. If $X = \lbrace \bx\rbrace$, we simply write $\bx|M$ and treat $\bx|M$ as a vector. We denote by $\br^\perp$ the orthogonal complement of the linear space spanned by the vector $\br\in \R^n\backslash\lbrace\bm{0}\rbrace$. 
	Let $\br^1,\ldots,\br^k$ be linearly independent with $\bR= (\br^1,\ldots,\br^k)$ and $M=(\br^k)^\perp$. Observe that $\pos\bR | M$ is a $(k-1)$-dimensional pointed cone given by the generators $\br^1|M,\ldots,\br^{k-1}|M$; see also the first part of Lemma \ref{lemma_projection_property}. 
	We define $\Cr(\pos\bR | M)$ to be the integer Carath\'{e}odory rank of $\pos\bR | M$ with respect to the projected lattice $(\lin\bR\cap\Z^n)| M$. Indeed, this is equivalent to our setting, where we consider the integer lattice, due to the following transformation: If $\bv^1,\ldots,\bv^{k-1}$ is a basis of $(\lin \bR \cap \Z^n)|M$, we can supplement this basis with $\bw^1,\ldots,\bw^{n-k+1}$ such that $\bB = (\bv^1,\ldots,\bv^{k-1},\bw^1,\ldots,\bw^{n-k+1})$ is invertible. We return to the integer lattice by transforming everything with $\bB^{-1}$. So we obtain the cone $\bB^{-1}\cdot( \pos \bR | M)$ with $\bB^{-1}\cdot(\br^i|M)\in\Z^n$ for all $i\in\lbrack k - 1\rbrack$ and the lattice $\bB^{-1}\cdot((\lin\bR\cap\Z^n)|M)\subseteq\Z^n$. This does not alter the integer Carath\'{e}odory rank. 
	
	\section{A projective lemma}\label{sec_projective_lemma}
	We formulate the key result for proving Theorems \ref{thm_simplicial_icp} and \ref{thm_simplicial_icr}.
	\begin{lemma}\label{lemma_bounded_complexity}
		Let $\br^1,\ldots,\br^k\in\Z^n$ be linearly independent, $\bR=(\br^1,\ldots,\br^k)$, and $L = \lin\bR$.
		\begin{enumerate}
			\item If $(\br^i)^* \in (L\cap\Z^n)^*$ for some $i\in\lbrack k\rbrack$ , then 
			\begin{align*}
				\Cr(\pos\bR)\leq \Cr(\pos\lbrace\br^1,\ldots,\br^{i-1},\br^{i + 1},\ldots,\br^k\rbrace) + 1.
 			\end{align*}
			\item If $(\br^i)^*- (\br^j)^*\in (L\cap\Z^n)^*$ for some $i,j\in\lbrack k\rbrack$ with $i\neq j$, then
			\begin{align*}
				\Cr(\pos\bR)\leq\max\left\lbrace \Cr(\pos\bR|(\br^i)^\perp), \Cr(\pos\bR|(\br^j)^\perp)\right\rbrace + 1.
			\end{align*}
		\end{enumerate}
	\end{lemma}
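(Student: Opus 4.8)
The plan is to prove both parts by the same device: from a lattice point of $\pos\bR$ peel off a single generator of $\pos\bR$ and reduce what remains to a lower‑dimensional simplicial cone whose integer Carath\'eodory rank we may assume known. Write $\Lambda=L\cap\Z^n$ and let $(\br^1)^*,\ldots,(\br^k)^*$ be the basis of $L$ dual to $\br^1,\ldots,\br^k$, so that every $\bz=\sum_{l}\mu_l\br^l\in L$ has $\mu_l=((\br^l)^*)^T\bz$. One preliminary remark is used throughout: the distinguished generator is primitive in $\Z^n$. Indeed, if $(\br^i)^*\in\Lambda^*$ and $\br^i=c\bu$ with $\bu\in\Lambda$ and $c\ge 2$, then $((\br^i)^*)^T\bu=1/c\notin\Z$, a contradiction; in part~2 one argues identically with $((\br^i)^*-(\br^j)^*)^T\bu=1/c$. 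Since the primitive vector on an extreme ray of a simplicial cone is a Hilbert basis element of that cone (it is the smallest nonzero lattice point on the ray, hence not a sum of two nonzero lattice points of the cone), we get $\br^i\in\mathcal{H}(\pos\bR)$ in part~1 and $\br^i,\br^j\in\mathcal{H}(\pos\bR)$ in part~2.

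For part~1, set $C'=\pos\{\br^l:l\in[k]\setminus\{i\}\}$, a simplicial cone with relevant lattice $\lin C'\cap\Z^n$. If some $\bh\in\mathcal{H}(C')$ admitted a splitting $\bh=\ba+\bb$ with $\ba,\bb\in\pos\bR\cap\Z^n$ nonzero, then comparing $\br^i$‑coefficients (all nonnegative and summing to the zero $\br^i$‑coefficient of $\bh$) would force $\ba,\bb\in\lin C'$, so the splitting already occurs in $C'$ --- impossible; hence $\mathcal{H}(C')\subseteq\mathcal{H}(\pos\bR)$. Now let $\bz=\sum_l\mu_l\br^l\in\pos\bR\cap\Z^n$. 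The hypothesis $(\br^i)^*\in\Lambda^*$ gives $\mu_i\in\Z_{\ge 0}$, so $\bz-\mu_i\br^i=\sum_{l\ne i}\mu_l\br^l$ belongs to $C'\cap(\lin C'\cap\Z^n)$. Writing this vector as a nonnegative integer combination of at most $\Cr(C')$ elements of $\mathcal{H}(C')\subseteq\mathcal{H}(\pos\bR)$ and adding back $\mu_i\br^i$ shows $\bz$ is a nonnegative integer combination of at most $\Cr(C')+1$ Hilbert basis elements of $\pos\bR$; maximising over $\bz$ gives the bound.

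For part~2, I would introduce the $(k-1)$‑dimensional simplicial cone $C''=\pos\bigl(\{\br^i+\br^j\}\cup\{\br^l:l\in[k]\setminus\{i,j\}\}\bigr)$, whose linear span is $L''=\{\bx\in L:((\br^i)^*)^T\bx=((\br^j)^*)^T\bx\}$; note $\br^i,\br^j\notin L''$ since $\br^1,\ldots,\br^k$ are independent. The first step is to show $\Cr(C'')=\Cr(\pos\bR|(\br^i)^\perp)=\Cr(\pos\bR|(\br^j)^\perp)$. The orthogonal projection $q_i$ onto $(\br^i)^\perp$ restricts to a linear isomorphism $L''\to(\br^i)^\perp\cap L$, because its kernel $\R\br^i$ meets $L''$ only in $\bm{0}$; it carries $C''$ onto $\pos\{\br^l|(\br^i)^\perp:l\ne i\}=\pos\bR|(\br^i)^\perp$ (using $q_i(\br^i+\br^j)=q_i(\br^j)$); and it maps $L''\cap\Z^n$ onto $(L\cap\Z^n)|(\br^i)^\perp$, the surjectivity being precisely where the hypothesis enters --- for $\bu\in L\cap\Z^n$ with coefficients $\mu_1,\ldots,\mu_k$ one has $\mu_i-\mu_j\in\Z$, so $\bu+(\mu_j-\mu_i)\br^i\in L''\cap\Z^n$ and it projects to $q_i(\bu)$. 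The same argument with $j$ yields the second equality, so it remains to prove $\Cr(\pos\bR)\le\Cr(C'')+1$.

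For that, fix $\bz=\sum_l\mu_l\br^l\in\pos\bR\cap\Z^n$; since $\mu_i-\mu_j\in\Z$ and $C''$ is symmetric in $i,j$, we may assume $m:=\mu_i-\mu_j\in\Z_{\ge 0}$, so that $\bz_1:=\bz-m\br^i=\mu_j(\br^i+\br^j)+\sum_{l\ne i,j}\mu_l\br^l$ lies in $C''\cap(L''\cap\Z^n)$. Decompose $\bz_1$ into a nonnegative integer combination of at most $\Cr(C'')$ elements of $\mathcal{H}(C'')$. Any $\bh\in\mathcal{H}(C'')$ not on the extreme ray of $C''$ spanned by $\br^i+\br^j$ has $(\br^i+\br^j)$‑coefficient in $[0,1)$ (otherwise $\bh-(\br^i+\br^j)$ would be a nonzero lattice point of $C''$), hence $\br^i$‑ and $\br^j$‑coefficients in $[0,1)$; then in any splitting $\bh=\ba+\bb$ inside $\pos\bR$ the hypothesis forces the $\br^i$‑ and $\br^j$‑coefficients of $\ba$ to coincide, and likewise for $\bb$, so $\ba,\bb\in C''$ and the splitting occurs already in $C''$ --- whence $\bh\in\mathcal{H}(\pos\bR)$. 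The unique element of $\mathcal{H}(C'')$ on that ray is its primitive vector $\bg$, and a short check with the hypothesis gives $\bg\in\mathcal{H}(\pos\bR)$ unless $\bg=\br^i+\br^j$, in which case every occurrence of $\bg$ in the decomposition is rewritten as $\br^i+\br^j$ with $\br^i,\br^j\in\mathcal{H}(\pos\bR)$. Reassembling $\bz=m\br^i+\bz_1$ and merging the at most one duplicated $\br^i$, we obtain $\bz$ as a nonnegative integer combination of at most $\Cr(C'')+1$ Hilbert basis elements of $\pos\bR$. Maximising over $\bz$ and substituting $\Cr(C'')=\max\{\Cr(\pos\bR|(\br^i)^\perp),\Cr(\pos\bR|(\br^j)^\perp)\}$ finishes the proof. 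The main obstacle is exactly this last step: the naive inclusion $\mathcal{H}(C'')\subseteq\mathcal{H}(\pos\bR)$ genuinely fails along the ray spanned by $\br^i+\br^j$ (which may be primitive, yet splits as $\br^i+\br^j$ in $\pos\bR$), so one must isolate that single ray and invoke the congruence $(\br^i)^*-(\br^j)^*\in\Lambda^*$ to exclude ``unbalanced'' splittings of every other Hilbert basis element of $C''$; establishing the lattice isomorphism $L''\cap\Z^n\to(L\cap\Z^n)|(\br^i)^\perp$, again via the hypothesis, is the other point that needs care.
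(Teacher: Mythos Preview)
Your argument is correct. Part~1 matches the paper's, with the added verifications that $\br^i$ is primitive and that $\mathcal{H}(C')\subseteq\mathcal{H}(\pos\bR)$. For part~2 the organisation differs: the paper projects $\bz$ onto $(\br^i)^\perp$ (assuming $\mu_i\ge\mu_j$), decomposes there, lifts each summand back to $\parr\bR\cup\{\br^1,\ldots,\br^k\}$ via Lemma~\ref{lemma_projection_property}, and then shows by a direct computation invoking Lemma~\ref{lemma_coeffdiff_zero} that the residual $\br^i$-coefficient $\sigma$ lies in $\N$; its exceptional case is when one of the lifts equals $\br^j$. You instead stay inside $\pos\bR$ via the diagonal subcone $C''=\pos\bR\cap L''$, peel off $m\br^i$ first so that the remainder lands in $C''$, and establish the near-inclusion $\mathcal{H}(C'')\subseteq\mathcal{H}(\pos\bR)\cup\{\br^i+\br^j\}$; your exceptional ray through $\br^i+\br^j$ corresponds under $q_i$ exactly to the paper's exceptional ray through $\br^j|(\br^i)^\perp$. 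Your route has two bonuses: it makes transparent that $\Cr(\pos\bR|(\br^i)^\perp)=\Cr(\pos\bR|(\br^j)^\perp)$ (both equal $\Cr(C'')$), so the $\max$ in the statement is cosmetic, and it is explicit about why the final decomposition uses only Hilbert basis elements of $\pos\bR$ --- a point the paper leaves implicit. The paper's route is shorter and feeds directly off its two auxiliary lemmas, so it integrates more tightly with the surrounding exposition.
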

	The aim of this section is to prove Lemma \ref{lemma_bounded_complexity}. To do so, we need two results, which we state and prove below.
	
	\begin{lemma}
		\label{lemma_coeffdiff_zero}
		Let $\br^1,\ldots,\br^k\in \Z^n$ be linearly independent, $\bR=(\br^1,\ldots,\br^k)$, and $L = \lin \bR$. Then $(\br^i)^* - (\br^j)^*\in (L\cap\Z^n)^*$ for $i,j\in\lbrack k\rbrack$ if and only if $\lambda_i - \lambda_j = 0$ for all $\by\in \parr\bR$ with $\by = \sum_{l=1}^k \lambda_l\br^l$.
	\end{lemma}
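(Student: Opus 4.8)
The plan is to work directly with the characterization of the dual lattice $(L\cap\Z^n)^*$ in terms of inner products, and to translate the condition $(\br^i)^*-(\br^j)^*\in(L\cap\Z^n)^*$ into a divisibility condition on the coefficients of lattice vectors expressed in the basis $\br^1,\dots,\br^k$. First I would observe that since $\br^1,\dots,\br^k$ is a basis of the (possibly non-saturated) sublattice $\bR\Z^k\subseteq L\cap\Z^n$, every $\bx\in L$ has a unique representation $\bx=\sum_{l=1}^k\lambda_l\br^l$, and the dual-basis vectors $(\br^1)^*,\dots,(\br^k)^*$ satisfy $(\br^l)^T(\br^m)^*=\delta_{lm}$, so $(\br^m)^*{}^T\bx=\lambda_m$ reads off the $m$-th coefficient. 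Hence for any $\bx=\sum_l\lambda_l\br^l\in L$ we have $\bigl((\br^i)^*-(\br^j)^*\bigr)^T\bx=\lambda_i-\lambda_j$.

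Next I would invoke the definition of the dual lattice: $(\br^i)^*-(\br^j)^*\in(L\cap\Z^n)^*$ holds if and only if $\bigl((\br^i)^*-(\br^j)^*\bigr)^T\bz\in\Z$ for every $\bz\in L\cap\Z^n$, i.e.\ if and only if $\lambda_i-\lambda_j\in\Z$ whenever $\bz=\sum_l\lambda_l\br^l\in\Z^n$. So the lemma reduces to showing that this ``$\lambda_i-\lambda_j\in\Z$ for all integral $\bz$'' condition is equivalent to ``$\lambda_i-\lambda_j=0$ for all $\by\in\parr\bR$''. The bridge is the standard fact (already recorded in the paper, e.g.\ via \cite[Lemma 2]{sebohilbertbasisdreidim90}) that $\parr\bR$ is a complete set of coset representatives for $(L\cap\Z^n)/\bR\Z^k$: every $\bz\in L\cap\Z^n$ can be written uniquely as $\bz=\by+\sum_l m_l\br^l$ with $\by\in\parr\bR$ and $m_l\in\Z$, and conversely every such sum is integral. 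Since adding an integer combination of the $\br^l$ changes each coefficient $\lambda_l$ by an integer, the quantity $\lambda_i-\lambda_j\bmod\Z$ depends only on the coset, i.e.\ only on $\by\in\parr\bR$.

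With this in hand both directions are short. For the ``only if'' direction: if $(\br^i)^*-(\br^j)^*\in(L\cap\Z^n)^*$ and $\by\in\parr\bR$ with $\by=\sum_l\lambda_l\br^l$, then $\lambda_l\in[0,1)$ for all $l$, so $\lambda_i-\lambda_j\in(-1,1)$; but also $\lambda_i-\lambda_j\in\Z$ by the dual-lattice membership, forcing $\lambda_i-\lambda_j=0$. For the ``if'' direction: assume $\lambda_i-\lambda_j=0$ for every $\by\in\parr\bR$; given an arbitrary $\bz\in L\cap\Z^n$, write $\bz=\by+\sum_l m_l\br^l$ with $\by\in\parr\bR$, $m_l\in\Z$; then the $i$-th and $j$-th coefficients of $\bz$ are $\lambda_i+m_i$ and $\lambda_j+m_j$, whose difference is $(\lambda_i-\lambda_j)+(m_i-m_j)=m_i-m_j\in\Z$; hence $\bigl((\br^i)^*-(\br^j)^*\bigr)^T\bz\in\Z$ for all integral $\bz$, which is exactly membership in $(L\cap\Z^n)^*$.

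I do not expect a serious obstacle here; the only point requiring a little care is making explicit that $(\br^i)^*-(\br^j)^*$ genuinely lies in $\lin\Lambda=L$ (so that ``applying it'' to vectors of $L$ captures all the relevant information and membership in $(L\cap\Z^n)^*$ is about pairings with integral vectors of $L$ only), and that $\parr\bR$ is a transversal of $(L\cap\Z^n)/\bR\Z^k$ rather than of $\Z^n/\bR\Z^k$ — i.e.\ that one should range $\bz$ over $L\cap\Z^n$, not all of $\Z^n$, which is consistent since $\parr\bR\subseteq L$. Everything else is bookkeeping with the coefficient functionals $\lambda_l=(\br^l)^*{}^T(\cdot)$.
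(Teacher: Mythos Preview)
Your proof is correct and follows essentially the same approach as the paper: both directions hinge on the identity $\bigl((\br^i)^*-(\br^j)^*\bigr)^T\bx=\lambda_i-\lambda_j$ together with the decomposition of any $\bz\in L\cap\Z^n$ as an element of $\parr\bR$ plus an integer combination of the $\br^l$. Your converse direction is in fact slightly more streamlined than the paper's, which detours through the Hilbert basis to argue that $\parr\bR\cup\{\br^1,\dots,\br^k\}$ generates $L\cap\Z^n$ before checking the pairing on generators, whereas you use the floor/fractional-part decomposition directly.
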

	\begin{proof}
		Let $(\br^i)^* - (\br^j)^*\in (L\cap\Z^n)^*$. We have
		\begin{align*}
			\lambda_i - \lambda_j = ((\br^i)^* - (\br^j)^*)^T\by\in\Z
		\end{align*}
		for $\by \in \parr\bR$ with $\by = \sum_{l=1}^k \lambda_l\br^l$ by the definition of the dual lattice. As $\lambda_i,\lambda_j\in \lbrack 0,1 )$, we obtain $\lambda_i - \lambda_j = 0$.
		
		We prove the converse direction. At first, we claim that the Hilbert basis of $\pos \bR$ is contained in the parallelepiped spanned by $\br^1,\ldots,\br^k$.
		Let $\by\in \pos\bR\cap\Z^n$ be such that $\by=\sum_{i=1}^k\lambda_i\br^i$ and $\lambda_j> 1$ for some $j\in\lbrack k\rbrack$, i.e., $\by$ is not contained in the parallelepiped spanned by $\br^1,\ldots,\br^k$. Then we can decompose $\by$ into two integral vectors such that
		\begin{align*}
			\by = \left((\lambda_j - 1)\br^j + \sum_{i = 1, i\neq j}^k \lambda_i\br^i \right) + \br^j.
		\end{align*}
		Both vectors are integral, contained in $\pos\bR$, and not $\bm{0}$. Therefore, $\by$ is not a Hilbert basis element. 
		Thus, we get that the Hilbert basis is contained in $\parr\bR\cup\lbrace \br^1,\ldots,\br^k\rbrace$. So the elements in $\parr\bR$ combined with $\br^1,\ldots,\br^k$ generate $\pos\bR \cap \Z^n$ with integral and non-negative coefficients. We claim that they integrally generate $L\cap\Z^n$ as well. 
		To see this, take some $\bw \in L\cap\Z^n$. We have
		\begin{align*}
			\bw = \sum_{i=1}^k \mu_i\br^i = \sum_{i=1}^k \lfloor\mu_i\rfloor \br^i + \sum_{i=1}^k (\mu_i - \lfloor \mu_i\rfloor)\br^i,
		\end{align*}
		where the first term is an integer vector and, thus, the second term is contained in $\pos\bR \cap \Z^n$. Hence, the second term is a (non-negative) integral combination of the Hilbert basis. We obtain that the set $\parr\bR\cup\lbrace \br^1,\ldots,\br^k\rbrace$ is a generating set of the lattice $L\cap \Z^n$. 
		Since $((\br^i)^* - (\br^j)^*)^T\by = 0 \in \Z$ for all $\by\in\parr\bR$ and $((\br^i)^* - (\br^j)^*)^T\br^l\in\lbrace 0,\pm 1\rbrace$ for $l\in\lbrack k\rbrack$, we conclude $(\br^i)^* - (\br^j)^*\in (L\cap\Z^n)^*$.
	\end{proof}
	
	The proof of Lemma \ref{lemma_bounded_complexity} consists of a projection argument. In the following, we verify that $\pos\bR$ and $\parr\bR$ behave well under certain orthogonal projections.
	\begin{lemma}
		\label{lemma_projection_property}
		Let $\br^1,\ldots,\br^k\in \Z^n$ be linearly independent, $\bR = (\br^1,\ldots,\br^k)$, and $L=\lin\bR$. Then we have
		\begin{enumerate}
			\item $\pos\bR|(\br^i)^\perp = \pos \lbrace \br^1|(\br^i)^\perp,\ldots,\br^{i-1}|(\br^i)^\perp,\br^{i+1}|(\br^i)^\perp,\ldots,\br^k|(\br^i)^\perp\rbrace$ and
			\item $\parr\bR|(\br^i)^\perp = \parr\lbrace\br^1|(\br^i)^\perp,\ldots,\br^{i-1}|(\br^i)^\perp,\br^{i+1}|(\br^i)^\perp,\ldots,\br^k|(\br^i)^\perp\rbrace$, where the second set is defined with respect to $(L\cap\Z^n)|(\br^i)^\perp$.
		\end{enumerate}
	\end{lemma}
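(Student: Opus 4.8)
The plan is to handle both parts at once by working with the orthogonal projection onto $(\br^i)^\perp$; write $\pi$ for this projection, so that $\pi$ is linear and $\pi(\br^i)=\bm{0}$.

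For the first statement, linearity of $\pi$ gives $\pos\bR|(\br^i)^\perp=\pi(\pos\bR)=\pos\{\pi(\br^1),\ldots,\pi(\br^k)\}$, and since $\pi(\br^i)=\bm{0}$ the generator coming from $\br^i$ may simply be discarded, which is precisely the claimed equality. Along the way I would record the auxiliary fact that the vectors $\pi(\br^l)$ with $l\neq i$ are linearly independent: a nontrivial linear relation among them would place a nontrivial combination of the $\br^l$, $l\neq i$, in $\lin\{\br^i\}$, contradicting the linear independence of $\br^1,\ldots,\br^k$. In particular $\pos\bR|(\br^i)^\perp$ is genuinely a $(k-1)$-dimensional simplicial cone, so the parallelepiped on the right-hand side of the second statement is well defined.

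For the second statement, the inclusion ``$\subseteq$'' is immediate: if $\by\in\parr\bR$, say $\by=\sum_{l=1}^k\lambda_l\br^l$ with every $\lambda_l\in[0,1)$, then $\pi(\by)=\sum_{l\neq i}\lambda_l\pi(\br^l)$ with the same coefficients in $[0,1)$, and $\pi(\by)\in(L\cap\Z^n)|(\br^i)^\perp$ since $\by\in L\cap\Z^n$. For ``$\supseteq$'', let $\bz$ lie in the right-hand side, so $\bz=\sum_{l\neq i}\mu_l\pi(\br^l)$ with $\mu_l\in[0,1)$ and $\bz=\pi(\bw)$ for some $\bw\in L\cap\Z^n$. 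Writing $\bw=\sum_{l=1}^k\nu_l\br^l$ and applying $\pi$, the linear independence of the $\pi(\br^l)$, $l\neq i$, forces $\nu_l=\mu_l$ for all $l\neq i$. Now set $\by:=\bw-\lfloor\nu_i\rfloor\br^i$: since $\bw,\br^i\in L\cap\Z^n$ we have $\by\in L\cap\Z^n$, its $\br^i$-coefficient equals $\nu_i-\lfloor\nu_i\rfloor\in[0,1)$, its remaining coefficients are the $\mu_l\in[0,1)$, hence $\by\in\parr\bR$, and $\pi(\by)=\pi(\bw)=\bz$. This yields the reverse inclusion and hence the equality.

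The only point that genuinely needs attention --- more bookkeeping than real obstacle --- is keeping track of which lattice the right-hand parallelepiped in the second statement lives in, namely $(L\cap\Z^n)|(\br^i)^\perp$ rather than $\Z^n\cap(\br^i)^\perp$: this is exactly what guarantees that an element $\bz$ of that parallelepiped admits an integral preimage $\bw\in L\cap\Z^n$, which is the one non-routine ingredient of the ``$\supseteq$'' direction. Everything else reduces to the identity $\pi(\br^i)=\bm{0}$ together with matching up coefficients.
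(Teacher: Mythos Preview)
Your proposal is correct and follows essentially the same route as the paper: linearity of the projection together with $\pi(\br^i)=\bm{0}$ handles part~1 and the ``$\subseteq$'' of part~2, and for ``$\supseteq$'' both you and the paper lift an element of the projected parallelepiped to an integral preimage in $L\cap\Z^n$ and then subtract $\lfloor\nu_i\rfloor\br^i$ to land in $\parr\bR$. You are a bit more explicit than the paper in recording the linear independence of the $\pi(\br^l)$, $l\neq i$, and in matching the coefficients of the preimage, but the argument is the same.
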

	
	\begin{proof}
		The linearity of the orthogonal projection onto $(\br^i)^\perp$ and the fact that $\br^i | (\br^i)^\perp = \bm{0}$ imply the first claim and the inclusion $"\subseteq"$ in the second claim.
		
		Without loss of generality let $i = 1$. Given $\by\in\parr\lbrace\br^2|(\br^1)^\perp,\ldots,\br^k|(\br^1)^\perp\rbrace$, there exist $\lambda_2,\ldots,\lambda_k\in \lbrack 0,1)$ such that $\by = \sum_{j = 2}^k\lambda_j(\br^j|(\br^1)^\perp)$. 
		We also know there exists $\lambda \in \R$ such that $\sum_{j = 2}^k\lambda_j\br^j + \lambda \br^1\in\Z^n$ since $\by\in(L\cap\Z^n)|(\br^1)^\perp$. Subtracting $\lfloor \lambda\rfloor\br^1\in\Z^n$ yields
		\begin{align*}
			\tilde{\by}=\sum_{j = 2}^k\lambda_j\br^j + (\lambda-\lfloor\lambda\rfloor) \br^1\in\parr\bR.
		\end{align*}
		The claim follows from the observation that $\tilde{\by}|(\br^1)^\perp = \by$.
	\end{proof}
	We are in the position to prove the main statement of this section.
	\begin{proof}[Proof of Lemma \ref{lemma_bounded_complexity}]
		We select an arbitrary $\bz\in \pos\bR \cap \Z^n$ with $\bz = \sum_{l=1}^k \mu_l\br^l$ for $\mu_1,\ldots,\mu_k\in \R_{\geq 0}$. 
		For the first statement, we observe that
		\begin{align*}
			\mu_i =  \bz^T(\br^i)^*\in \Z
		\end{align*}
		as $(\br^i)^*\in (L\cap\Z^n)^*$. Hence, $\bz - \mu_i\br^i \in \pos\lbrace\br^1,\ldots,\br^{i-1},\br^{i+1},\ldots,\br^k\rbrace$. This implies already the first statement.
		
		For the second statement, we assume without loss of generality that $\mu_i\geq \mu_j$. There are $\by^1|(\br^i)^\perp,\ldots,\by^s|(\br^i)^\perp$ and coefficients $\sigma_1,\ldots,\sigma_{s}\in \N$ such that 
		\begin{align*}
			\bz|(\br^i)^\perp = \sum_{l = 1}^{s} \sigma_l(\by^l | (\br^i)^\perp)
		\end{align*}
		with $s\leq \Cr(\pos\bR|(\br^i)^\perp)$ and $\by^1,\ldots,\by^s\in\parr\bR\cup\lbrace \br^1,\ldots,\br^k\rbrace$ by Lemma \ref{lemma_projection_property}. 
		Hence, there exists some $\sigma\in \R$ that satisfies
		\begin{align*}
			\bz = \sigma\br^i + \sum_{l = 1}^{s} \sigma_l\by^l.
		\end{align*}
		Our goal is to show that $\sigma\in\N$, which then implies the second statement. 
		There are $\tau_1^t,\ldots,\tau_k^t\in\lbrack 0,1\rbrack$ such that $\by^t = \sum_{l=1}^k\tau_l^t\br^l$ for each $t\in\lbrack s\rbrack$. So we get
		\begin{align*}
			\bz = \bR\left(\sum_{l = 1}^{s}\sigma_l\btau^t + \sigma \be^i\right),
		\end{align*} 
		where $\btau^t\in\R^n$ with $\btau^t_j = \tau^t_j$ for all $j\in\lbrack n\rbrack$ and $t\in\lbrack s\rbrack$. This yields
		\begin{align*}
			0\leq\mu_i - \mu_j = \sigma + \sum_{l = 1}^s \sigma_l\tau^l_i - \sum_{l = 1}^s \sigma_l\tau^l_j.
		\end{align*}
		Further, we obtain $\tau^t_i = \tau^t_j$ if $\by^t\in \parr\bR$ from Lemma \ref{lemma_coeffdiff_zero}. This implies that the difference of the two sums above is $0$ provided that $\by^t\in \parr\bR\cup\lbrace \br^1,\ldots\br^k\rbrace \backslash \lbrace \br^i,\br^j\rbrace$ for all $t \in \lbrack s\rbrack$. In this special case, we get $\sigma\in \N$ as 
		\begin{align*}
			\sigma = \mu_i - \mu_j = ((\br^i)^* - (\br^j)^*)^T\bz \in \Z
		\end{align*}
		and $\mu_i-\mu_j\geq 0$. 
		So we are left with the case $\by^l = \br^i$ or $\by^l = \br^j$ for some $l \in \lbrack s\rbrack$. Note that $\by^l = \br^i$ is not possible since this would mean $\by^l|(\br^i)^\perp = \bm{0}$. Hence, we assume $\by^l = \br^j$. Here, the difference of the two sums above is $-\sigma_l$. As $\sigma_l\in\N$ by construction, we get $\sigma\in \Z$ by the same argument as before. Furthermore, we have $0\leq \mu_i - \mu_j\leq \sigma$. We conclude that $\sigma\in\N$, which proves the second part of the lemma.
	\end{proof}
	
	\section{Proofs of Theorems \ref{thm_simplicial_icp} and \ref{thm_simplicial_icr}}
	\label{section_main_proofs}
	To prove Theorem \ref{thm_simplicial_icp}, Theorem \ref{thm_simplicial_icr}, and the Corollary \ref{cor_three_coset} below, we employ the following strategy: First, we observe that we are in the situation of the first or second case of Lemma \ref{lemma_bounded_complexity}. After applying the lemma once, we argue that the multiplicity and coset structure of the elements in $\parr \bR$ behave well. Afterwards, we transform everything back to integer lattice via the transformation outlined in Section \ref{ss_notation_proj} if necessary and repeat this procedure. 
	Before we begin to prove Theorems \ref{thm_simplicial_icp} and \ref{thm_simplicial_icr}, we have to ensure that the second step above is well-defined.
	\begin{lemma}\label{lemma_projection_coset}
		Let $\br^1,\ldots,\br^k$ be linearly independent, $\bR = (\br^1,\ldots,\br^k)$, and $L = \lin\bR$. Then
		\begin{enumerate}
			\item $\Delta(\br^1|(\br^i)^\perp,\ldots,\br^{i-1}|(\br^i)^\perp,\br^{i+1}|(\br^i)^\perp,\ldots,\br^k|(\br^i)^\perp)\leq \Delta(\br^1,\ldots,\br^k)$, where the left term is defined with respect to $(L\cap\Z^n)|(\br^i)^\perp$, and
			\item $(\br^s|(\br^i)^\perp)^*-(\br^t|(\br^i)^\perp)^*\in ((L\cap\Z^n)|(\br^i)^\perp)^*$ if $(\br^s)^*-(\br^t)^*\in (L\cap\Z^n)^*$ for $s,t\in\lbrack k\rbrack$ with $s\neq t$. 
		\end{enumerate}
	\end{lemma}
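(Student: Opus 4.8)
The plan is to deduce part 1 directly from Lemma \ref{lemma_projection_property} and part 2 from an explicit description of how the dual basis of $\bR$ behaves under the projection. Throughout, write $M = (\br^i)^\perp$, $\Lambda = L\cap\Z^n$, and $\Lambda' = \Lambda|M$; after relabelling we may assume $i = k$, so that $\br^k|M = \bm{0}$ while $\br^1|M,\ldots,\br^{k-1}|M$ are linearly independent and span $L\cap M = L|M = \lin\Lambda'$ (the projection $L\to L|M$ has kernel $\lin\{\br^k\}$, and $L\cap\Z^n$ has rank $k$). Recall also that the orthogonal projection onto $M$ is $\bx\mapsto \bx|M = \bx - \frac{(\br^k)^T\bx}{(\br^k)^T\br^k}\br^k$ and is self-adjoint.

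\textbf{Part 1.} By Lemma \ref{lemma_projection_property}(2), the set $\parr\lbrace\br^1|M,\ldots,\br^{k-1}|M\rbrace$ taken with respect to $\Lambda'$ equals $\parr\bR|M$, the image of the finite set $\parr\bR$ under $\bx\mapsto\bx|M$. A surjection cannot increase cardinality, so $\lvert\parr\bR|M\rvert\le\lvert\parr\bR\rvert$, and unwinding the definition of $\Delta$ this is precisely the claimed inequality $\Delta(\br^1|M,\ldots,\br^{k-1}|M)\le\Delta(\br^1,\ldots,\br^k)$.

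\textbf{Part 2.} The crux is the identity $(\br^j)^*|M = (\br^j|M)^*$ for $j\in[k-1]$, where $(\br^j|M)^*$ denotes the dual basis of $\br^1|M,\ldots,\br^{k-1}|M$ inside $L\cap M$. Since $(\br^j)^*\in L$ we have $(\br^j)^*|M\in L|M = \lin\lbrace\br^1|M,\ldots,\br^{k-1}|M\rbrace$, so by uniqueness of the dual basis it is enough to check biorthogonality: for $l,j\in[k-1]$, self-adjointness of the projection together with $(\br^k)^T(\br^j)^* = \delta_{kj} = 0$ gives
\begin{align*}
	(\br^l|M)^T\bigl((\br^j)^*|M\bigr) = (\br^l)^T(\br^j)^* - \frac{\bigl((\br^l)^T\br^k\bigr)\bigl((\br^k)^T(\br^j)^*\bigr)}{(\br^k)^T\br^k} = \delta_{lj}.
\end{align*}
Granting this, set $\bv := (\br^s)^* - (\br^t)^*$, which lies in $\Lambda^*$ by hypothesis, so that $(\br^s|M)^* - (\br^t|M)^* = \bv|M$. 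Every element of $\Lambda'$ has the form $\bw|M$ with $\bw\in\Lambda$, and since $s,t\ne k$ we have $(\br^k)^T\bv = \delta_{ks} - \delta_{kt} = 0$, whence
\begin{align*}
	(\bw|M)^T(\bv|M) = \bw^T\bv - \frac{\bigl(\bw^T\br^k\bigr)\bigl((\br^k)^T\bv\bigr)}{(\br^k)^T\br^k} = \bw^T\bv \in\Z.
\end{align*}
Therefore $\bv|M\in(\Lambda')^*$, which is the assertion.

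The step needing the most care is the vanishing of the cross term $(\br^k)^T\bv$ in the last display: this is exactly where we use that the index pair $\lbrace s,t\rbrace$ avoids the projection index $i$ — which is automatic in the regime where the statement makes sense, since otherwise one of $(\br^s|M)^*,(\br^t|M)^*$ would be a purported dual basis vector attached to the zero vector $\br^i|M$. Beyond this bookkeeping and the (harmless) uniqueness-of-dual-basis argument, the proof reduces to a one-line expansion of the orthogonal projection, so I do not anticipate a genuine obstacle.
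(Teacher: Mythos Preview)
Your proof is correct. Part 1 is identical to the paper's argument. For Part 2 you take a genuinely different route: the paper argues via Lemma \ref{lemma_coeffdiff_zero} in both directions --- it takes $\tilde{\by}=\by|(\br^i)^\perp$ in the projected parallelepiped, computes $\tilde{\by}^T\bigl((\br^s|(\br^i)^\perp)^*-(\br^t|(\br^i)^\perp)^*\bigr)=\lambda_s-\lambda_t$, invokes the forward direction of Lemma \ref{lemma_coeffdiff_zero} to get $\lambda_s-\lambda_t=0$, then (after passing to the integer lattice as in Subsection \ref{ss_notation_proj}) invokes the reverse direction to conclude. You instead establish the structural identity $(\br^j)^*|M=(\br^j|M)^*$ for $j\neq i$ via biorthogonality and then check dual-lattice membership directly by pairing $\bv|M$ with an arbitrary element of $\Lambda'$. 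Your approach is more self-contained --- it avoids the coordinate characterisation of Lemma \ref{lemma_coeffdiff_zero} and the auxiliary change of basis --- and yields as a byproduct the clean fact that projection commutes with taking dual bases (indeed, since $(\br^i)^T\bv=0$ you have $\bv|M=\bv$ on the nose). The paper's approach has the advantage of reusing the same coefficient-level language employed throughout Section \ref{sec_projective_lemma}, keeping the machinery uniform. Your handling of the implicit constraint $s,t\neq i$ is also correct and matches what the paper tacitly assumes.
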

	\begin{proof}
		We begin by proving the first statement. By Lemma \ref{lemma_projection_property}, we know that $\parr\lbrace\br^1|(\br^i)^\perp,\ldots,\br^{i-1}|(\br^i)^\perp,\br^{i+1}|(\br^i)^\perp,\ldots,\br^k|(\br^i)^\perp\rbrace = \parr\bR|(\br^i)^\perp$. This implies the first statement since 
		\begin{align*}
			\Delta(\br^1|(\br^i)^\perp,\ldots,\br^{i-1}|(\br^i)^\perp,\br^{i+1}|(\br^i)^\perp,\ldots,\br^k|(\br^i)^\perp) &= \left|\parr\bR|(\br^i)^\perp\right| \\ &\leq \left|\parr\bR\right| \\ &=\Delta(\br^1,\ldots,\br^k).
		\end{align*}
		For the second claim, let $\tilde{\by}\in \parr\lbrace\br^1|(\br^i)^\perp,\ldots,\br^{i-1}|(\br^i)^\perp,\br^{i+1}|(\br^i)^\perp,\ldots,\br^k|(\br^i)^\perp\rbrace$ with $\tilde{\by}= \by|(\br^i)^\perp$ for $\by = \sum_{l=1}^k\lambda_l\br^l$. We obtain
		\begin{align*}
			\tilde{\by}^T\left((\br^s|(\br^i)^\perp)^* - (\br^t|(\br^i)^\perp)^*\right) &= (\by|(\br^i)^\perp)^T\left((\br^s|(\br^i)^\perp)^* - (\br^t|(\br^i)^\perp)^*\right) \\ &= \sum_{l=1}^{k}\lambda_l (\br^l|(\br^i)^\perp)^T\left((\br^s|(\br^i)^\perp)^* - (\br^t|(\br^i)^\perp)^*\right) \\ &= \lambda_s - \lambda_t = 0,
		\end{align*}
		where we utilize that $\lambda_s - \lambda_t = 0$ by Lemma \ref{lemma_coeffdiff_zero}.
		This holds for all elements in $\parr\bR|(\br^i)^\perp$. After using the transformation outlined in Section \ref{ss_notation_proj} from $(L\cap\Z^n)|(\br^i)^\perp$ to $\Z^n$, we can apply Lemma \ref{lemma_coeffdiff_zero} as the coefficients remain unchanged. We conclude that $(\br^s|(\br^i)^\perp)^*-(\br^t|(\br^i)^\perp)^*\in ((L\cap\Z^n)|(\br^i)^\perp)^*$.
	\end{proof}
	This in combination with Lemma \ref{lemma_bounded_complexity} and Theorem \ref{thm_sebo_three} has the following direct implication.
	\begin{corollary}\label{cor_three_coset}
		Let $\br^1,\ldots,\br^k\in\Z^n$ be linearly independent, $\bR = (\br^1,\ldots,\br^k)$, and $L=\lin\bR$. Then $\pos \bR$ has the (ICP) if $\lbrace(\br^1)^* + (L\cap\Z^n)^*,\ldots,(\br^k)^* + (L\cap\Z^n)^*\rbrace\subseteq(L\cap\bR\Z^n)^* / (L\cap\Z^n)^*$ contains at most three non-trivial pairwise different elements.
	\end{corollary}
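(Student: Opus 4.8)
The plan is to prove, by induction on $k$, the following slightly more flexible assertion: if $\bs^1,\ldots,\bs^k\in\Z^n$ are linearly independent, $\bS=(\bs^1,\ldots,\bs^k)$, $L'=\lin\bS$, and the set $\{(\bs^1)^*+(L'\cap\Z^n)^*,\ldots,(\bs^k)^*+(L'\cap\Z^n)^*\}$ contains at most three non-trivial elements, then $\Cr(\pos\bS)=k$ (with respect to $L'\cap\Z^n$). Since $\Cr(\pos\bS)\geq k$ always holds -- every representation of an interior lattice point of a full-dimensional simplicial cone needs at least $k$ Hilbert basis elements, so $k$ is the minimal possible value -- only the bound $\Cr(\pos\bS)\leq k$ needs an argument. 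For the base case $k\leq 3$ one applies the coordinate change from Subsection \ref{ss_notation_proj} to view $\pos\bS$ as a full-dimensional simplicial cone in $\R^k$ and then invokes Theorem \ref{thm_sebo_three}.

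For the inductive step let $k\geq 4$, and distinguish two cases. Suppose first that some coset $(\bs^i)^*+(L'\cap\Z^n)^*$ is trivial, i.e.\ $(\bs^i)^*\in(L'\cap\Z^n)^*$. Then the first part of Lemma \ref{lemma_bounded_complexity} gives $\Cr(\pos\bS)\leq\Cr(\pos\bS^{(i)})+1$, where $\bS^{(i)}$ arises from $\bS$ by deleting $\bs^i$. To apply the induction hypothesis to $\pos\bS^{(i)}$ I will check that its coset hypothesis is inherited: writing $\pi$ for the orthogonal projection of $L'$ onto the hyperplane $\lin\bS^{(i)}$, one verifies that the dual basis vector of $\bs^l$ with respect to $(\bs^m)_{m\neq i}$ inside $\lin\bS^{(i)}$ equals $\pi((\bs^l)^*)$, and that $\pi$ maps $(L'\cap\Z^n)^*$ into $(\lin\bS^{(i)}\cap\Z^n)^*$ (using that $\pi$ is self-adjoint and fixes $\lin\bS^{(i)}$ pointwise). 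Hence, whenever a coset of $\bS^{(i)}$ is trivial, or equals another coset of $\bS^{(i)}$, the corresponding cosets of $\bS$ already had that property; so the number of distinct non-trivial cosets does not grow, and by induction $\Cr(\pos\bS^{(i)})=k-1$, giving $\Cr(\pos\bS)\leq k$.

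Suppose next that no coset is trivial. Then the $k\geq 4$ cosets take at most three distinct values, so by the pigeonhole principle $(\bs^i)^*-(\bs^j)^*\in(L'\cap\Z^n)^*$ for some $i\neq j$. Now the second part of Lemma \ref{lemma_bounded_complexity} gives $\Cr(\pos\bS)\leq\max\{\Cr(\pos\bS|(\bs^i)^\perp),\Cr(\pos\bS|(\bs^j)^\perp)\}+1$. By Lemma \ref{lemma_projection_property} each of these projections is a $(k-1)$-dimensional simplicial cone, and by Lemma \ref{lemma_projection_coset} equal cosets stay equal after projecting, so at most three of the (at most $k-1$) cosets of each projected cone are non-trivial. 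Passing back to the integer lattice as in Subsection \ref{ss_notation_proj} changes neither the integer Carath\'eodory rank nor the coset data: which cosets are trivial and which pairs of cosets coincide are read off from the coefficients in the generator representations of lattice points (cf.\ Lemma \ref{lemma_coeffdiff_zero}), hence are unaffected by a linear change of coordinates. Thus the induction hypothesis applies to both projected cones and bounds each of their ranks by $k-1$, so $\Cr(\pos\bS)\leq k$. This completes the induction, and the corollary is the instance $\bS=\bR$ of the assertion.

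The step I expect to require the most care is confirming that the property ``at most three non-trivial cosets'' is inherited by the cones produced by Lemma \ref{lemma_bounded_complexity}. In the second case this is essentially Lemma \ref{lemma_projection_coset}; in the first case one needs the analogue for the orthogonal projection onto the hyperplane $\lin\bS^{(i)}\subseteq L'$, which is the reason for isolating the identity between $\pi((\bs^l)^*)$ and the restricted dual basis vector above. A secondary point that should be spelled out carefully is that all of these manipulations are compatible with the passage to the integer lattice described in Subsection \ref{ss_notation_proj}, and that the lower bound $\Cr(\pos\bS)\geq k$ does hold for full-dimensional simplicial cones, so that the resulting bound $\Cr(\pos\bS)\leq k$ indeed upgrades to $\Cr(\pos\bS)=k$.
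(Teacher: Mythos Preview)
Your argument is correct and follows the same route as the paper: induct on $k$, use Theorem \ref{thm_sebo_three} for $k\leq 3$, and for $k\geq 4$ invoke one of the two cases of Lemma \ref{lemma_bounded_complexity} after checking that the ``at most three non-trivial cosets'' hypothesis descends to the smaller cone (via Lemma \ref{lemma_projection_coset} in the projection case; the paper simply asserts the inheritance in the facet case, whereas you supply the projection map $\pi$). One small slip to fix: the sentence ``whenever a coset of $\bS^{(i)}$ is trivial \ldots\ the corresponding cosets of $\bS$ already had that property'' has the implication the wrong way round; your computation with $\pi$ actually proves---and your conclusion requires---the forward direction, namely that trivial or equal cosets of $\bS$ remain trivial or equal for $\bS^{(i)}$.
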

	\begin{proof}
		Every three-dimensional cone has the (ICP) due to Theorem \ref{thm_sebo_three}. Therefore, we assume that $k\geq 4$. The assumption that $\lbrace(\br^1)^* + (L\cap\Z^n)^*,\ldots,(\br^k)^* + (L\cap\Z^n)^*\rbrace$ contains at most three non-trivial pairwise different elements implies that there either exists $i\in\lbrack k\rbrack$ such that $(\br^i)^*\in (L\cap\Z^n)^*$ or there exist pairwise different indices $i,j\in\lbrack k\rbrack$ such that $(\br^i)^*-(\br^j)^*\in (L\cap\Z^n)^*$. Hence, we are in one of the cases of Lemma \ref{lemma_bounded_complexity}. 
		
		In the first case, we pass to $\pos\lbrace\br^1,\ldots,\br^{i-1},\br^{i+1},\ldots\br^k\rbrace$. This is a $(k-1)$-dimensional cone, which still satisfies that $\lbrace(\br^1)^* + (\tilde{L}\cap\Z^n)^*,\ldots,(\br^{i-1})^* + (\tilde{L}\cap\Z^n)^*,(\br^{i + 1})^* + (\tilde{L}\cap\Z^n)^*,\ldots,(\br^k)^* + (\tilde{L}\cap\Z^n)^*\rbrace$ contains at most three non-trivial pairwise different elements for $\tilde{L} = \lin\lbrace\br^1,\ldots,\br^{i-1},\br^{i+1},\ldots\br^k\rbrace$.
		
		In the second case, we know from the second part of Lemma \ref{lemma_projection_coset} that $\lbrace (\br^1|(\br^i)^\perp)^* + \Lambda,\ldots,(\br^{i-1}|(\br^i)^\perp)^* + \Lambda,(\br^{i+1}|(\br^i)^\perp)^* + \Lambda,\ldots,(\br^{k}|(\br^i)^\perp)^* + \Lambda\rbrace$ contains at most three non-trivial pairwise different elements, where we have $\Lambda = ((L\cap\Z^n)|(\br^i)^\perp)^*$. Transforming back to the integer lattice via the transformation outlined in Section \ref{ss_notation_proj} gives us a $(k-1)$-dimensional cone defined by integral generators.
		
		Hence, we can apply Lemma \ref{lemma_bounded_complexity} as long as the dimension is at least four. In each step, we only add one element to our non-zero integral combination. We repeat this procedure until the dimension equals three. In this case, we utilize Theorem \ref{thm_sebo_three} to finish the proof of the statement.
	\end{proof}
	We are in the position to prove Theorem \ref{thm_simplicial_icp} and Theorem \ref{thm_simplicial_icr}. Here, we apply Lemma \ref{lemma_bounded_complexity} and Lemma \ref{lemma_projection_coset} in a similar manner as in the proof of Corollary \ref{cor_three_coset}. The main part of the proof is concerned with constructing a unimodular cover in the case when $\left|\det\bR\right| = 5$. 
	\begin{proof}[Proof of Theorem \ref{thm_simplicial_icp}]
		The assumption $1\leq \left|\det\bR\right|\leq 5$ implies that the number of cosets in $\Z^n / \bR\Z^n$ is bounded by five. Hence, by duality, the number of cosets in $(\bR\Z^n)^* / \Z^n$ is bounded by five as well. Since one of those cosets is trivial, the set $\lbrace (\br^1)^* + \Z^n,\ldots,(\br^n)^* + \Z^n\rbrace$ contains at most four non-trivial pairwise different elements. In particular, if $\left|\det\bR\right|\leq 4$, the number of non-trivial pairwise different cosets is bounded by three. In this case, the claim follows from Corollary \ref{cor_three_coset}.
		
		It is left to show that $\pos\bR$ has the (ICP) if $\left|\det\bR\right| = 5$. Following the arguments given in the proof of Corollary~\ref{cor_three_coset}, we assume that $\lbrace (\br^1)^* + \Z^n,\ldots,(\br^n)^* + \Z^n\rbrace$ does not satisfy one of the two assumptions in Lemma~\ref{lemma_bounded_complexity}. In other words, there exists no trivial element in $\lbrace (\br^1)^* + \Z^n,\ldots,(\br^n)^* + \Z^n\rbrace$ and all elements are pairwise different. Since $\left|\det\bR\right| = 5$, we get that $n = 4$. 
		Moreover, we have $\Z^4 / \bR\Z^4\cong \Z/5\Z$, which implies that $\Z^4 / \bR\Z^4$ is cyclic. So there exists an element $\by^1\in\parr\bR$ such that $\by^1 + \bR\Z^n$ generates $\Z^4 / \bR\Z^4$. We can write $\by^1 = \lambda_1\br^1 + \lambda_2\br^2 + \lambda_3\br^3 + \lambda_4\br^4$ for $\lambda_1,\lambda_2,\lambda_3,\lambda_4\in \frac{1}{5}\lbrace 0,1,2,3,4\rbrace$ by Cramer's rule. Further, we can assume that $\lambda_i\neq\lambda_j$ for $i,j\in\lbrack 4\rbrack$ with $i\neq j$. Otherwise, we have that $\mu_i = \mu_j$ for all $\by\in\parr\bR$ with $\by = \mu_1\br^1+\ldots+\mu_4\br^4$ as $\by^1+\bR\Z^4$ generates $\Z^4 / \bR\Z^4$. However, this contradicts the assumption that the set $\lbrace (\br^1)^* + \Z^4,\ldots,(\br^4)^* + \Z^4\rbrace$ contains only pairwise different elements by Lemma \ref{lemma_coeffdiff_zero}. 	
		In a similar manner, we have $\lambda_i\neq 0$ for all $i\in\lbrack 4\rbrack$. So we assume without loss of generality that $\lambda_i = \frac{i}{5}$ for $i\in\lbrack 4 \rbrack$. Let us denote by $\by^2,\by^3,\by^4$ the other non-trivial elements in $\parr\bR$. Since $\by^1 + \bR\Z^4$ generates $\Z^4 / \bR\Z^4$, we can write
		\begin{align*}
			(\br^1,\br^2,\br^3,\br^4,\by^1,\by^2,\by^3,\by^4) = \bR\begin{pmatrix}
				1 & 0 & 0 & 0 & 1 / 5 & 2 / 5 & 3 / 5 & 4 / 5 \\
				0 & 1 & 0 & 0 & 2 / 5 & 4 / 5 & 1 / 5 & 3 / 5 \\
				0 & 0 & 1 & 0 & 3 / 5 & 1 / 5 & 4 / 5 & 2 / 5 \\
				0 & 0 & 0 & 1 & 4 / 5 & 3 / 5 & 2 / 5 & 1 / 5	
			\end{pmatrix}.
		\end{align*}
		Observe that the vectors $\by^1,\by^2,\by^3,\by^4$ are contained in the affine hyperplane given by $\lbrace \bx\in\R^4 : (1,1,1,1)\bR^{-1}\bx = 2\rbrace$. Hence, each $\by^i$ can not be written as a non-negative integral combination of elements in $\lbrace\br^1,\br^2,\br^3,\br^4,\by^1,\by^2,\by^3,\by^4\rbrace\backslash\lbrace \by^i\rbrace$. We conclude that the Hilbert basis of $\pos\bR$ equals $\lbrace\br^1,\br^2,\br^3,\br^4,\by^1,\by^2,\by^3,\by^4\rbrace$. 
		
		We show that $\pos\bR$ has the (ICP) by constructing a unimodular cover $\mathcal{C}$ of $\pos\bR$ using the Hilbert basis of $\pos\bR$. The resulting unimodular cover implies the (ICP) since every integer vector in the cone is contained in a unimodular subcone spanned by the Hilbert basis. We proceed as follows: First, we present the construction of the cover in detail. In this process, it is important that the interiors of the subcones do not intersect. For the sake of readability and brevity, we do not formally verify this property. 
		This can be done by formulating for each pair of simplicial subcones a linear program with an arbitrary non-zero linear functional, where the constraints are given by the polyhedral description of the two subcones with strict inequalities. The resulting linear program will turn out to be infeasible as the intersection of the interiors are empty. 
		Alternatively, one can utilize suitable software such as polymake or SageMath that is capable of computing the intersection and dimension of polyhedra. 
		Once we have the collection of subcones, we prove that they indeed cover $\pos\bR$ via a volume argument. 
		
		To motivate the unimodular cover, observe that $\pos\lbrace \by^1,\by^2,\by^3,\by^4\rbrace$ is a three-dimensional cone contained in $\R^4\cap\lbrace \bx\in\R^4 : (1,-1,-1,1)\bR^{-1}\bx = 0\rbrace$. For each one-dimensional face of $\pos\lbrace \by^1,\by^2,\by^3,\by^4\rbrace$, there exists a unique three-dimensional face of $\pos\bR$ such that the cone spanned by the one-dimensional and three-dimensional face is unimodular. For instance, select $\by^1$, a generator of a one-dimensional face of $\pos\lbrace \by^1,\by^2,\by^3,\by^4\rbrace$. Then, the unique three-dimensional face of $\pos\bR$ corresponding to $\by^1$ is $\pos\lbrace \br^2,\br^3,\br^4\rbrace$. Repeating this procedure for every one-dimensional face of $\pos\lbrace \by^1,\by^2,\by^3,\by^4\rbrace$ yields
		\begin{align*}
			&C_{(2,3,4),(1)} = \pos\lbrace \br^2,\br^3,\br^4, \by^1\rbrace, & C_{(1,3,4),(3)} = \pos\lbrace \br^1,\br^3,\br^4, \by^3\rbrace, \\
			&C_{(1,2,4),(2)} = \pos\lbrace \br^1,\br^2,\br^4, \by^2\rbrace, & C_{(1,2,3),(4)} = \pos\lbrace \br^1,\br^2,\br^3, \by^4\rbrace.
		\end{align*}
		By computing determinants, one can verify that each cone is indeed a unimodular subcone of $\pos\bR$. Moreover, it is possible to check that the interiors of these subcones do not intersect. 
		In a similar manner, we fix two-dimensional faces of $\pos\lbrace \by^1,\by^2,\by^3,\by^4\rbrace$. As earlier, for each of those faces, there exists a unique two-dimensional face of $\pos\bR$ such that the resulting subcone is unimodular. This gives us the subcones 
		\begin{align*}
			& C_{(1,2),(2,4)} = \pos\lbrace\br^1,\br^2,\by^2,\by^4\rbrace, & C_{(1,3),(3,4)} = \pos\lbrace \br^1,\br^3,\by^3,\by^4\rbrace, \\
			& C_{(2,4),(1,2)} = \pos\lbrace \br^2,\br^4,\by^1,\by^2\rbrace, & C_{(3,4),(1,3)} = \pos\lbrace \br^3,\br^4,\by^1,\by^3\rbrace.
		\end{align*}
		Again, one can verify that the cones are unimodular and the interiors of all our current subcones do not intersect. 
		We are left with constructing a unimodular cover for the three-dimensional cone $\pos\lbrace \by^1,\by^2,\by^3,\by^4\rbrace$. In addition to this, we still need to utilize the remaining two-dimensional faces of $\pos\bR$, which are $\pos\lbrace \br^1,\br^4\rbrace$ and $\pos\lbrace \br^2,\br^3\rbrace$. We proceed with constructing a unimodular cover for the subcones $\pos\lbrace \br^2,\br^3,\by^1,\by^2,\by^3,\by^4\rbrace$ and $\pos\lbrace \br^1,\br^4,\by^1,\by^2,\by^3,\by^4\rbrace$. This suffices to cover $\pos\bR$, which we verify at the end of the proof.  
		By symmetry, we only present a unimodular cover of $\pos\lbrace \br^2,\br^3,\by^1,\by^2,\by^3,\by^4\rbrace$ in detail. The construction of a unimodular cover for $\pos\lbrace \br^1,\br^4,\by^1,\by^2,\by^3,\by^4\rbrace$ works analogously. 
		We begin with constructing a unimodular triangulation for $\pos\lbrace \by^1,\by^2,\by^3,\by^4\rbrace$ using the two-dimensional subcone $\pos\lbrace\by^2,\by^3\rbrace$. This yields the unimodular subcones
		\begin{align*}
			&C_{(2),(2,3,4)} = \pos\lbrace \br^2,\by^2,\by^3,\by^4\rbrace, & C_{(2,3),(2,3)} = \pos\lbrace\br^2,\br^3,\by^2,\by^3\rbrace, \\& C_{(3),(1,2,3)} = \pos\lbrace \br^3,\by^1,\by^2,\by^3\rbrace. &
		\end{align*}
		We are left with the unimodular subcones 
		\begin{align*}
			& C_{(2,3),(1,2)} = \pos\lbrace\br^2,\br^3,\by^1,\by^2\rbrace, & C_{(2,3),(3,4)} = \pos\lbrace\br^2,\br^3,\by^3,\by^4\rbrace.
		\end{align*}
		Again, one can verify that the interiors have an empty intersection. When constructing a unimodular cover for the other subcone $\pos\lbrace \br^1,\br^4,\by^1,\by^2,\by^3,\by^4\rbrace$, one has to choose the two-dimensional subcone $\pos\lbrace\by^1,\by^4\rbrace$ to obtain a unimodular triangulation of $\pos\lbrace \by^1,\by^2,\by^3,\by^4\rbrace$ and select the corresponding subcones as above. 
		
		Let $\mathcal{C}$ be the collection of the subcones above, including the unmentioned subcones in the unimodular cover of $\pos\lbrace \br^1,\br^4,\by^1,\by^2,\by^3,\by^4\rbrace$. We claim that $\mathcal{C}$ is a unimodular cover $\pos\bR$. To show this, let $H=\lbrace\bx\in\R^4 : (1,1,1,1)\bR^{-1}\bx \leq 2\rbrace$. Since it is possible to verfiy that the pairwise intersection of each subcone in $\mathcal{C}$ has empty interior, it suffices to argue that 
		\begin{align*}
			\vol\left(\bigcup_{C\in\mathcal{C}} C\cap H\right) \geq \vol(\pos\bR\cap H),
		\end{align*}
		where $\vol(\cdot)$ denotes the four-dimensional Lebesgue measure. This is indeed sufficient because the volume inequality implies 
		$\pos\bR\cap H\subseteq\bigcup_{C\in\mathcal{C}} C\cap H$. By scaling, this gives us $\pos\bR \subseteq \bigcup_{C\in\mathcal{C}} C$. In the following, we prove the volume inequality above. Observe that we have $\pos\bR\cap H = 2\cdot \conv\lbrace \bm{0},\br^1,\br^2,\br^3,\br^4\rbrace$, where $\conv X$ denotes the convex hull of the set $X\subseteq\R^n$. So we deduce 
		\begin{align*}
			\vol(\pos\bR\cap H) = \frac{2^4}{4!}\left|\det\bR\right| = \frac{10}{3}.
		\end{align*}
		To determine the volume of the simplices corresponding to unimodular subcones in $\mathcal{C}$, we have to take into account that the vertices coming from the generators are scaled to $2\br^1,2\br^2,2\br^3,2\br^4$. So we begin by counting the number of generators, $\br^1,\br^2,\br^3,\br^4$, in each subcone. There are four subcones with precisely three generators, ten subcones with two generators, and four subcones with only one generator. As every subcone is full-dimensional and unimodular, we calculate 
		\begin{align*}
			\vol\left(\bigcup_{C\in\mathcal{C}} C\cap H\right) = \sum_{C\in\mathcal{C}}\vol(C\cap H) = \frac{1}{4!} (4 \cdot 2^3 + 10 \cdot 2^2 + 4 \cdot 2^1) = \frac{80}{4!} = \frac{10}{3}.
		\end{align*}
		So we conclude that $\pos\bR = \bigcup_{C\in\mathcal{C}}C$.
	\end{proof}
	Although the collection of subcones $\mathcal{C}$ from the previous proof form a unimodular cover and their interiors do not intersect, $\mathcal{C}$ does not give a unimodular triangulation of the cone $\pos\bR$. The reason for this is that the three-dimensional cone $\pos\lbrace \by^1,\by^2,\by^3,\by^4\rbrace$ is triangulated by once using $\pos\lbrace \by^2,\by^3\rbrace$ and once using $\pos\lbrace \by^1,\by^4\rbrace$. So there are cones in $\mathcal{C}$ whose intersections do not correspond to a common face.
	
	\begin{proof}[Proof of Theorem \ref{thm_simplicial_icr}]
		Since $n\geq \left|\det\bR\right|$ and $(\bR\Z^n)^* / \Z^n$ has cardinality $\left|\det\bR\right|$, either one of the vectors $(\br^1)^*,\ldots,(\br^n)^*$ is integral or the difference of two pairwise different vectors is integral by the pigeonhole principle. So we are in the situation of Lemma \ref{lemma_bounded_complexity}. 
		Thus, we can reduce the dimension by one. By the first part of Lemma \ref{lemma_projection_coset}, the corresponding lower-dimensional volume of the respective parallelepiped does not exceed $\Delta(\br^1,\ldots,\br^n) = \left|\det\bR\right|$. We repeat this procedure until the dimension equals $\left|\det \bR\right| - 1$. As $\left|\det \bR\right| - 1\geq 5>2$, we can apply Seb\H{o}'s bound, Theorem \ref{thm_sebo_icr}, and obtain
		\begin{align*}
			\Cr(\pos\bR)\leq 2(\left|\det\bR\right| - 1) - 2 + (n - (\left|\det \bR\right| - 1)) = n + \left|\det \bR\right| - 3.
		\end{align*}
	\end{proof}
	
	\section{Special cones with the (ICP)}\label{sec_special_cones}
	Utilizing our methods, we collect special instances of well-known simplicial cones that have the (ICP). 
	In fact, it should not be too challenging to obtain more examples using the methods from the earlier chapters. 
	We proceed by applying the slightly more abstract statement given in Corollary \ref{cor_three_coset}. We will see that the restriction on the number of cosets naturally translates to restrictions on the representation of our following cones. 
	
	The first example is given by $n-1$ standard unit vectors and one additional integral vector $\br$. This setting has been studied with respect to the integer Carath\'{e}odory rank and only special instances with the (ICP) are known; see \citep{henkweis02diophantineandintcara} for some of them. We assume $\br_n = \Delta$ and that the first $n-1$ coordinates of $\br$ admit at most two different values apart from $0$ and $\Delta - 1$, e.g., $\br = (0,\ldots,0,a,\ldots,a,b,\ldots,b,\Delta - 1,\ldots,\Delta -1,\Delta)^T$ for some $a,b\in\lbrace 1,\ldots,\Delta - 2\rbrace$.
	
	\begin{proposition}
		\label{prop_skew_vector}
		Let $\bR = (\be^1,\ldots,\be^{n-1},\br)\in \Z^{n\times n}$ with $\br_n = \Delta \geq 1$. Further, let $I\subseteq\lbrace 0,\ldots,\Delta - 1\rbrace$ such that $\br_i\in I$ for all $i \in \lbrack n - 1\rbrack$. If $\left| I\backslash\lbrace 0,\Delta - 1\rbrace \right|\leq 2$, then
		\begin{align*}
			\Cr(\pos\bR) = n.
		\end{align*}
	\end{proposition}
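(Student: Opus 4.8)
The plan is to deduce the statement from Corollary~\ref{cor_three_coset} by computing the dual-basis cosets of $\bR$ explicitly. First I would observe that $\bR$ is upper triangular with diagonal $(1,\dots,1,\Delta)$, so $\det\bR=\Delta$, $L:=\lin\bR=\R^n$, and $\pos\bR$ is a pointed simplicial cone with $L\cap\Z^n=\Z^n$ and $(L\cap\Z^n)^*=\Z^n$. Writing $\bR=\begin{pmatrix}I_{n-1}&\br'\\0&\Delta\end{pmatrix}$ with $\br'=(\br_1,\dots,\br_{n-1})^T$ and inverting this block form, the dual basis is read off from the columns of $\bR^{-T}$:
\[
(\br^i)^*=\be^i-\tfrac{\br_i}{\Delta}\be^n\quad(i\in[n-1]),\qquad (\br^n)^*=\tfrac1\Delta\be^n .
\]

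Second, I would describe the finite group $(L\cap\bR\Z^n)^*/(L\cap\Z^n)^*=\bR^{-T}\Z^n/\Z^n$, which has order $\Delta$. From $(\br^i)^*-(\Delta-\br_i)(\br^n)^*=\be^i-\be^n\in\Z^n$ it follows that this group is cyclic, generated by $(\br^n)^*+\Z^n$, with $(\br^i)^*+\Z^n=(\Delta-\br_i)\bigl((\br^n)^*+\Z^n\bigr)$ for every $i\in[n-1]$. Identifying the group with $\Z/\Delta\Z$ via $(\br^n)^*+\Z^n\leftrightarrow 1$, the coset of $(\br^i)^*$ is $\Delta-\br_i\bmod\Delta$: it is trivial exactly when $\br_i=0$, and it equals the coset $1$ exactly when $\br_i=\Delta-1$. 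In particular the value $\Delta-1$ produces the same non-trivial coset as $(\br^n)^*$ itself.

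Third, I would invoke the hypothesis. Each $\br_i$ lies in $\{0,\Delta-1\}\cup(I\setminus\{0,\Delta-1\})$, and $I\setminus\{0,\Delta-1\}\subseteq\{a,b\}$ where $a,b$ denote its (at most two) elements. By the previous step, the values $0$ and $\Delta-1$ contribute only the trivial coset and the coset $1$, while $a$ and $b$ contribute at most the cosets $\Delta-a$ and $\Delta-b$. Hence $\{(\br^1)^*+\Z^n,\dots,(\br^n)^*+\Z^n\}$ contains at most the three pairwise distinct non-trivial cosets corresponding to $1,\Delta-a,\Delta-b$ in $\Z/\Delta\Z$, and Corollary~\ref{cor_three_coset} yields $\Cr(\pos\bR)=n$. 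The argument is uniform in $\Delta\ge 1$; for $\Delta\in\{1,2\}$ the quotient group is trivial or of order two and the conclusion is immediate.

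As for the main obstacle: there is no genuinely difficult step, the proof is essentially bookkeeping with dual bases and a cyclic quotient. The one point that must not be missed is the collapse noted in the second step, namely that $\br_i=\Delta-1$ produces exactly the coset of $(\br^n)^*$. This is precisely why excluding $0$ and $\Delta-1$ in the hypothesis is the correct normalization: without this observation one would only bound the number of non-trivial cosets by four, and Corollary~\ref{cor_three_coset} would fail to apply.
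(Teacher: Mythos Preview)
Your proposal is correct and follows essentially the same approach as the paper: compute the columns of $\bR^{-T}$ explicitly, observe that $\br_i=0$ gives the trivial coset, that $\br_i=\Delta-1$ collapses to the coset of $(\br^n)^*$, and that equal values of $\br_i$ give equal cosets, whence at most three non-trivial cosets remain and Corollary~\ref{cor_three_coset} applies. Your additional remark that the quotient is cyclic with generator $(\br^n)^*+\Z^n$ is a convenient way to phrase the bookkeeping, but it is not needed beyond what the paper uses.
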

	\begin{proof}
		We claim that $\lbrace \bR^{-T}\be^1 + \Z^n,\ldots,\bR^{-T} \be^n + \Z^n\rbrace$ contains at most three non-trivial cosets in $\bR^{-T}\Z^n / \Z^n$. Then, the statement follows from Corollary \ref{cor_three_coset}. 
		Observe that $\bR^{-T} = (\be^1,\ldots,\be^{n - 1}, \br^*)^T$, where $\br_i^* = -\frac{\br_i}{\Delta}$ for $i \in \lbrack n-1\rbrack$ and $\br_n^* = \frac{1}{\Delta}$. So we have $\bR^{-T}\be^i\in\Z^n$ if and only if $\br_i = 0$. Further, if $0 \neq \br_k = \br_l$ for $k,l\in\lbrack n - 1\rbrack$, we obtain $\bR^{-T}(\be^k - \be^l)\in\Z^n$. Also, we have $\bR^{-T}(\be^j - \be^n)\in\Z^n$ for $j\in\lbrack n - 1\rbrack$ if $\br_j = \Delta - 1$. Therefore, every non-zero element in $I$ contributes exactly one non-trivial coset. There are at most three non-zero elements in $I$ since $|I\backslash\lbrace 0\rbrace| \leq 1 + |I\backslash \lbrace 0,\Delta - 1\rbrace|\leq 3$. The claim follows from Corollary \ref{cor_three_coset}.
	\end{proof}
	
	For our second example, we assume that there exists $\by \in \intt (\pos\bR)\cap \parr\bR$ with $\by + (\pos\bR \cap \Z^n) = \intt (\pos\bR)\cap \Z^n$. Special cones satisfying this premise are dual cones of simplicial Gorenstein cones. The general class of Gorenstein cones and polytopes plays an important role in the study of toric varieties; see, e.g., \citep{batyrevreflexgorenstein1993} for the first appearance of Gorenstein cones and their related polyhedra.
	
	\begin{proposition}
		\label{prop_gorenstein_4}
		Let $\bR = (\br^1,\ldots,\br^n)\in\Z^{n \times n}$ be such that there exists a vector $\by \in \intt (\pos\bR)\cap \parr\bR$ with $\by + (\pos\bR \cap \Z^n) = \intt (\pos\bR)\cap \Z^n$. Further, let $\left| \det \bR\right|$ have at most four divisors and let $\Z^n / \bR\Z^n$ be cyclic. Then
		\begin{align*}
			\Cr(\pos\bR) = n.
		\end{align*}		
	\end{proposition}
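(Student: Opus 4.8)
The plan is to reduce to Corollary~\ref{cor_three_coset}. Since $\bR$ is invertible, $L := \lin\bR = \R^n$, so $(L\cap\Z^n)^* = \Z^n$, $(L\cap\bR\Z^n)^* = (\bR\Z^n)^* = \bR^{-T}\Z^n$, and $(\br^i)^* = \bR^{-T}\be^i$. Hence it suffices to show that $\lbrace(\br^1)^*+\Z^n,\ldots,(\br^n)^*+\Z^n\rbrace\subseteq\bR^{-T}\Z^n/\Z^n$ contains at most three pairwise distinct non-trivial elements. Put $d := |\det\bR|$. The group $\bR^{-T}\Z^n/\Z^n$ is isomorphic to $\Z^n/\bR^T\Z^n$, hence to $\Z^n/\bR\Z^n$ (same Smith normal form), so it is cyclic of order $d$ by hypothesis; it is dual to $\Z^n/\bR\Z^n$ via the nondegenerate pairing $\langle\bm a+\Z^n,\bm b+\bR\Z^n\rangle = \bm a^T\bm b\bmod\Z$.

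The main step is to determine the barycentric coordinates of the Gorenstein point. Write $\by = \sum_{i=1}^n\lambda_i\br^i$, so $\lambda_i = (\bR^{-1}\by)_i\in(0,1)$ since $\by\in\parr\bR$. For each $i$ let $d_i\in\N$ be defined by $\lbrace(\bR^{-1}\bx)_i : \bx\in\Z^n\rbrace = \frac1{d_i}\Z$ (a subgroup of $\Q$ containing $(\bR^{-1}\br^i)_i = 1$). Reducing the $i$-th coordinate map modulo $\Z$ gives a well-defined surjection $\Z^n/\bR\Z^n\twoheadrightarrow\frac1{d_i}\Z/\Z$, so $d_i\mid d$. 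I claim $\lambda_i = \frac1{d_i}$: since $\lambda_i\in\frac1{d_i}\Z$ and $\lambda_i>0$ we get $\lambda_i\geq\frac1{d_i}$; conversely, pick $\bx_0\in\Z^n$ with $(\bR^{-1}\bx_0)_i = \frac1{d_i}$ and set $\bx := \bx_0 + N\sum_{j\neq i}\br^j$ for $N\in\N$ large, so that $\bx\in\intt(\pos\bR)\cap\Z^n$ (all barycentric coordinates except the $i$-th are made positive) while $(\bR^{-1}\bx)_i = \frac1{d_i}$ is unchanged, and then the hypothesis $\by+(\pos\bR\cap\Z^n) = \intt(\pos\bR)\cap\Z^n$ forces $\bx-\by\in\pos\bR$, whence $\lambda_i\leq(\bR^{-1}\bx)_i = \frac1{d_i}$. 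In particular $\lambda_i<1$ gives $d_i\geq 2$.

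Next I would show $\by+\bR\Z^n$ generates $\Z^n/\bR\Z^n$. Its order is the least $m$ with $m\lambda_i\in\Z$ for all $i$, namely $\lcm_i d_i$. The $n$ coordinate maps assemble into an injection $\Z^n/\bR\Z^n\hookrightarrow\prod_{i=1}^n\Z/d_i$ (a class is in the kernel iff all its barycentric coordinates are integral, i.e.\ iff it is trivial), so the cyclic group $\Z^n/\bR\Z^n$ of order $d$ embeds as a cyclic subgroup of $\prod_i\Z/d_i$, giving $d\mid\lcm_i d_i$; with $d_i\mid d$ this yields $\lcm_i d_i = d$, so $\by+\bR\Z^n$ has order $d$. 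Consequently, by nondegeneracy of the pairing, each $(\br^i)^*+\Z^n$ is determined by $\langle(\br^i)^*+\Z^n,\by+\bR\Z^n\rangle = ((\br^i)^*)^T\by\bmod\Z = \lambda_i$; thus $(\br^i)^*+\Z^n = (\br^j)^*+\Z^n$ iff $\lambda_i = \lambda_j$ iff $d_i = d_j$, and each $(\br^i)^*+\Z^n$ is non-trivial because $\lambda_i\neq 0$. Therefore the number of distinct elements among the $(\br^i)^*+\Z^n$ equals the number of distinct values of the $d_i$, which all lie in $\lbrace m\in\Z : m\mid d,\ m>1\rbrace$; by hypothesis this set has size at most $4-1 = 3$, and Corollary~\ref{cor_three_coset} gives $\Cr(\pos\bR) = n$.

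The step I expect to require the most care is the identity $\lambda_i = \frac1{d_i}$, and within it the verification that the auxiliary lattice point $\bx$ — obtained by translating $\bx_0$ into the interior along the facet directions $\br^j$, $j\neq i$ — genuinely retains $i$-th barycentric coordinate $\frac1{d_i}$; this is precisely where the Gorenstein shift hypothesis on $\by$ enters. Everything else (the role of cyclicity in forcing $\lcm_i d_i = d$, and the divisor count) is then essentially bookkeeping with the pairing.
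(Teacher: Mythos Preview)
Your proof is correct and follows essentially the same route as the paper's: both reduce to Corollary~\ref{cor_three_coset} by showing that the barycentric coordinates of the Gorenstein point satisfy $\lambda_i=\tfrac{1}{d_i}$ for divisors $d_i>1$ of $d=|\det\bR|$, that $\by+\bR\Z^n$ generates the cyclic group $\Z^n/\bR\Z^n$, and that equal $\lambda_i$'s force equal cosets $(\br^i)^*+\Z^n$. Your organization is arguably cleaner in two places: you define $d_i$ directly as the denominator of the $i$-th coordinate group and obtain $\lambda_i=\tfrac{1}{d_i}$ from the Gorenstein hypothesis alone (the paper first proves the minimality statement~(\ref{eq_gorenstein_lambda}), then uses cyclicity to identify the minimum), and you deduce that equal coordinates give equal cosets via the duality pairing rather than by invoking Lemma~\ref{lemma_coeffdiff_zero}; but these are packaging differences rather than a different strategy.
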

	\begin{proof}
		Let $\by = \bR \blambda$ with $\blambda\in\frac{1}{\left| \det \bR\right|}\lbrace 1,\ldots,\left| \det \bR\right| - 1\rbrace^n$ by Cramer's rule. Observe that $\blambda_i\neq 0$ for $i\in\lbrack n\rbrack$ since $\by$ is in the interior of the cone. As a first step, we prove that the coefficients of $\blambda$ have to be small. To do so, we apply the transformation $\bR^{-1}$ and analyze the elements in $\R^n_{\geq 0}\cap \Lambda$ for $\Lambda = \bR^{-1}\Z^n$. Note that $\bR^{-1}\cdot\parr\bR = \lbrack 0,1)^n\cap\Lambda$. Let $\bmu \in \R^n_{\geq 0}\cap\Lambda$ and let $\bmu_k\neq 0$. If $\bmu$ lies in the interior of $\R^n_{\geq 0}$, we must have $\blambda_k\leq\bmu_k$ by $\blambda + (\R^n_{\geq 0} \cap \Lambda) = \intt \R^n_{\geq 0}\cap \Lambda$. Thus, we assume that $\bmu$ is contained in the boundary of $\R^n_{\geq 0}$, i.e., there exists $l\in\lbrack n\rbrack$ such that $\bmu_l = 0$. Let $I = \lbrace l\in\lbrack n\rbrack : \bmu_l = 0\rbrace$. This yields 
		\begin{align*}
			\bmu + \sum_{l\in I}\be^l\in\intt \R^n_{\geq 0} \cap \Lambda.
		\end{align*}
		As $k\notin I$ and $\blambda + (\R^n_{\geq 0} \cap \Lambda) = \intt \R^n_{\geq 0}\cap \Lambda$, we get $\blambda_k\leq\left(\bmu_k + \sum_{l\in I}\be_k^l\right)=\bmu_k$. As a result, we conclude 
		\begin{align}
			\label{eq_gorenstein_lambda}
			\blambda_k = \min \lbrace \bmu_k : \bmu\in \R^n_{\geq 0} \cap \Lambda \text{ with }\bmu_k \neq 0\rbrace
		\end{align}
		for all $k\in\lbrack n\rbrack$.
		
		As a next step, we show that $\blambda$ has at most three different entries. To do so, we need that $\Z^n / \bR\Z^n$ is cyclic. This is equivalent to $\Lambda / \Z^n$ being cyclic. We want to show that $\blambda + \Z^n$ generates $\Lambda / \Z^n$. Since $\Lambda / \Z^n$ is cyclic, there exists a $\bmu \in \lbrack 0,1)^n \cap \Lambda$ such that $\bmu + \Z^n$ generates $\Lambda / \Z^n$. This means that some entry of $\bmu$ indexed by $j\in\lbrack n\rbrack$ satisfies $\bmu_j = \frac{s}{\left|\det\bR\right|}$ for $s\in\lbrace 1,\ldots,\left|\det\bR\right|\rbrace$, where $s$ and $\left|\det\bR\right|$ are coprime. There exists $m\in\lbrace 1,\ldots,\left|\det\bR\right| - 1\rbrace$ with $ms \equiv 1 \mod \left|\det\bR\right|$, which implies that $m\bmu - \lfloor m\bmu\rfloor\in\lbrack 0,1)^n\cap\Lambda$ has $j$-th entry $\frac{1}{\left|\det\bR\right|}$. So we obtain $\blambda_j = \frac{1}{\left|\det\bR\right|}$ by (\ref{eq_gorenstein_lambda}). This yields that $\blambda + \Z^n$ is a generator of $\Lambda / \Z^n$. 
		We argue that $\blambda$ has at most three different entries. Select $i\in\lbrack n \rbrack$. Let $\blambda_i = \frac{p_i}{q_i}$ for $q_i\in\lbrace 1,\ldots,\left|\det\bR\right|\rbrace$ and $p_i\in\lbrace 1,\ldots, q_i - 1\rbrace$. Since $\blambda + \Z^n$ generates $\Lambda / \Z^n$, we have $q_i \bmu_i\in\N$ for all $\bmu\in \lbrack 0,1)^n\cap\Lambda$. By a similar argument as before, there exists $m\in\lbrace 1,\ldots,q_i - 1\rbrace$ such that $m p_i \equiv 1\mod q_i$ and, thus, $m\bmu - \lfloor m\bmu\rfloor\in\lbrack 0,1)^n\cap\Lambda$ with $\bmu_i = \frac{1}{q_i}$. Again, we conclude that $\blambda_i = \frac{1}{q_i}$ by (\ref{eq_gorenstein_lambda}). We know that $q_i$ divides $\left|\det\bR\right|$ by Cramer's rule. Since there are only four divisors of $\left|\det\bR\right|$, where one of them is $1$, we conclude that $\blambda$ has at most three different entries. 	
		
		As a final step, we claim that $\blambda_k = \blambda_l$ for $k\neq l$ implies $\bR^{-T}(\be^k-\be^l)\in\Z^n$. This property combined with the fact that $\blambda$ has at most three different entries and Corollary \ref{cor_three_coset} prove the statement. 
		Every $\bmu\in\lbrack 0,1)^n\cap\Lambda$ can be written as $m\blambda - \lfloor m \blambda\rfloor$ for $m\in\lbrace 0,\ldots,\left|\det \bR\right| - 1\rbrace$ since $\blambda + \Z^n$ generates $\Lambda / \Z^n$. Therefore, we deduce that $\bmu_k = \bmu_l$ for all $\bmu\in \R^n_{\geq 0} \cap \Lambda$ if $\blambda_k = \blambda_l$. The claim that $\bR^{-T}(\be^k-\be^l)\in\Z^n$ follows from Lemma \ref{lemma_coeffdiff_zero}.
	\end{proof}

	In Proposition \ref{prop_gorenstein_4} we assume among other things that $\by\in\parr\bR$. The statement remains valid if we drop this assumption and suppose that $\by\in \intt (\pos\bR)\cap \Z^n$ with $\by + (\pos\bR \cap \Z^n) = \intt (\pos\bR)\cap \Z^n$. In this case, it is possible to deduce that a face of $\pos\bR$ satisfies the premises of Proposition \ref{prop_gorenstein_4}. 	
	Below, we present a cone that meets the assumptions of Proposition \ref{prop_gorenstein_4} and does not satisfy the assumptions of Proposition \ref{prop_skew_vector}. 
	Let $p,q\in\N$ be distinct prime numbers and $k\in\lbrace 1,\ldots,q - 1\rbrace$ and $l\in\lbrace 1,\ldots, p - 1\rbrace$ such that $kp+lq = pq - 1$. The integers $k$ and $l$ always exist. For instance, choose $k\in\lbrace 1,\ldots,q - 1\rbrace$ such that $k p \equiv q - 1\mod q$, which implies $pq - 1 - kp \equiv 0 \mod q$. So we have $pq - 1 - kp = lq$ for some $l\in\lbrace 1,\ldots, p - 1\rbrace$. We define
	\begin{align*}
		\bR = \begin{pmatrix}
			1 & 0 & l & k\\
			0 & 1 & l & k\\
			0 & 0 & p & 0\\
			0 & 0 & 0 & q
		\end{pmatrix}.
	\end{align*}
	Note that $\det\bR = pq$ has four divisors, $1$, $p$, $q$, and $pq$, and observe that 
	\begin{align*}
		\blambda = \begin{pmatrix}
			\frac{1}{pq}\\
			\frac{1}{pq}\\
			\frac{1}{p}\\
			\frac{1}{q}
		\end{pmatrix}
	\end{align*}
	yields $\by = \bR\blambda\in\Z^4$. So $\by + \bR\Z^4$ generates $\Z^4 / \bR\Z^4$, which implies that $\Z^4 / \bR\Z^4$ is cyclic. If there exists some $\bz\in\intt(\pos\bR)\cap\Z^4$ with $\bz = \bR\bmu$ that is not contained in $\by + (\pos\bR \cap \Z^4)$, then there exists some $j\in\lbrack 4\rbrack$ with $0<\bmu_j < \blambda_j$. As $\blambda_1$ and $\blambda_2$ are already minimal, $j$ has to be either three or four. However, $0<\bmu_j < \blambda_j$ implies that $(\bR\bmu)_j\notin \Z$, a contradiction. So we get $\by + (\pos\bR \cap \Z^4) = \intt (\pos\bR)\cap \Z^4$. Thus, $\by$ meets the assumptions of Proposition \ref{prop_gorenstein_4}. The matrix $\bR$ is already in Hermite normal form; see \citep[Chapter 4]{schrijvertheorylinint86} for more about Hermite normal forms and unimodular transformations. As the Hermite normal form is unique, the matrix $\bR$ can not be transformed into a matrix given by three standard unit vectors and an additional vector $\br$ via a unimodular transformation. So $\pos\bR$ does not satisfy the assumptions of Proposition \ref{prop_skew_vector}.
	\newline
	\newline
	\noindent \textbf{Acknowledgments.} The author is grateful to Martin Henk for several helpful discussions on this topic and valuable remarks on the manuscript. The author also thanks the anonymous reviewer for comments that led to an improved version of the manuscript.
	
	\bibliography{references} 
	
\end{document}